\newtheorem{lemma}{Lemma}[section]
\newtheorem{theorem}[lemma]{Theorem}
\newtheorem{proposition}[lemma]{Proposition}
\newtheorem{corollary}[lemma]{Corollary}
\theoremstyle{definition}
\newtheorem{definition}[lemma]{Definition}
\numberwithin{equation}{section}
\numberwithin{figure}{section}
\begin{document}

%%%%%%%%%%%%%%%%%%%%%%%%%%%%%%%%%%%%%%%%%%%%%%%%%%%%%%%%%%%%%%%%%%%%%%%%
\title{\huge On the dimension of spaces of algebraic curves}     % Enter your title between curly braces
%please make separate author, address, email blocks for each author
\author{Hakop Hakopian, Harutyun Kloyan}

\date{}          % Enter your date or \today between curly braces

\maketitle

%\title{On the basic properties of $GC_n$ sets}

% Place any personal macros used in the document here.

% Place any personal macros used in the document here.
%\documentclass[a4paper,12pt]{article}
%\special{papersize=17cm,24cm}

\begin{abstract} Let a set of nodes $\mathcal X$ in plain be $n$-independent, i.e., each node has a fundamental polynomial of degree $n.$ Suppose also that
$|\mathcal X|= d(n,k-2)+2,$ where $d(n,k-2) = (n+1)+n+\cdots+(n-k+4)$ and $\ k\le n-1.$ In this paper we prove that there can be at most $4$ linearly independent curves of degree less than or equal to $k$ passing through
all the nodes of $\mathcal X.$ We provide a characterization of the case when there are exactly four such curves. Namely, we prove that then the set $\mathcal X$ has a very special construction: All its nodes but two belong to a (maximal) curve of degree $k-2.$

At the end, an important application to the Gasca-Maeztu conjecture is provided.
\end{abstract}

{\bf Key words:}  algebraic curves, $n$-independent nodes, maximal curves, Gasca-Maeztu conjecture.

{\bf Mathematics Subject Classification (2010):}
41A05, 41A63.

%\tableofcontents

%%%% **** The text of the paper starts here **** %%%%
%\baselineskip 15pt
\section{\label{1}  Introduction}
%Geometric characterization
%Interpolations problem and poised sets.

Denote the space of all bivariate polynomials of total degree $\leq n$ by $\Pi_n:$
$$\Pi_n= \left\{ \sum_{i+j\leq n} a_{ij} x^i y^j \right\}.$$
We have that
\begin{equation*}
  N := N_n := \dim \Pi_n = (1/2)(n+1)(n+2).
\end{equation*}

Consider a set of $s$ distinct nodes
\begin{equation*}
 {\mathcal X}={\mathcal X}_s = \{(x_1,y_1), (x_2,y_2), \ldots, (x_s,y_s) \}.
\end{equation*}
The problem of finding a polynomial $p \in \Pi_n$ which satisfies the conditions
\begin{equation}\label{eq:intpr}
  p(x_i,y_i) = c_i, \quad i=1, \ldots, s,
\end{equation}
is called interpolation problem.

A polynomial $p \in \Pi_n$ is called a fundamental polynomial for a node $A\in{\mathcal X}$ if
\begin{equation*}
 p(A)=1\ \ \text{and}\ \ p\big\vert_{{\mathcal X}\setminus\{A\}} = 0,
\end{equation*}
where $p\big\vert_{\mathcal X}$ means the restriction of $p$ on $\mathcal X.$ We denote the fundamental polynomial
by $p_{A}^\star.$ Sometimes we call fundamental also a polynomial that vanishes at all nodes
of ${\mathcal X}$ but one, since it is a nonzero constant times a fundamental polynomial.

\begin{definition} \label{poised}
The interpolation problem with a set of nodes ${\mathcal X}_s$ and $\Pi_n$
is called \emph{$n$-poised} if for any data $(c_1,\ldots, c_s)$
there is a \emph{unique} polynomial $p\in\Pi_n$ satisfying the
interpolation conditions \eqref{eq:intpr}.
\end{definition}
A necessary condition of
poisedness is $|{\mathcal X}_s|= s = N.$ 

\begin{proposition}\label{prp:int0}
A set of nodes ${\mathcal X}_N$ is $n$-poised if and only if
\begin{equation*}
 p\in\Pi_n\ \ \text{and}\ \ p\big\vert_{{\mathcal X}_N} = 0
 \quad\implies\quad p = 0.
\end{equation*}
\end{proposition}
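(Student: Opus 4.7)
The plan is to reformulate $n$-poisedness as a statement about a linear map between finite-dimensional vector spaces of equal dimension, and then invoke the rank-nullity theorem (or the equivalence of injectivity, surjectivity, and bijectivity in that setting).

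More concretely, I would define the evaluation map
\[
  L : \Pi_n \longrightarrow \mathbb{R}^{N}, \qquad L(p) = \bigl( p(x_1,y_1), \ldots, p(x_N,y_N) \bigr),
\]
which is linear in $p$. By hypothesis $|\mathcal{X}_N| = N = \dim \Pi_n$, so the domain and codomain of $L$ have the same dimension. Unwinding Definition~\ref{poised}, the interpolation problem is $n$-poised precisely when, for every prescribed data vector $(c_1,\ldots,c_N) \in \mathbb{R}^N$, there exists a unique $p \in \Pi_n$ with $L(p) = (c_1,\ldots,c_N)$; that is, when $L$ is a bijection.

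For a linear map between two vector spaces of the same finite dimension, bijectivity is equivalent to injectivity, which in turn is equivalent to $\ker L = \{0\}$. The latter is exactly the assertion that any $p \in \Pi_n$ with $p\vert_{\mathcal{X}_N} = 0$ must be the zero polynomial. This proves both implications of the proposition simultaneously.

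There is essentially no obstacle here; the only point that merits a line of comment in a fully written-out proof is the appeal to equal dimensions, since the stated condition is only the injectivity half of bijectivity, and one needs the dimension count $|\mathcal{X}_N| = N = \dim \Pi_n$ (already recorded as a necessary condition just above the proposition) to upgrade injectivity to surjectivity and thereby recover existence of the interpolant.
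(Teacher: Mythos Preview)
Your argument is correct and is exactly the standard one: identify $n$-poisedness with bijectivity of the evaluation map $L:\Pi_n\to\mathbb{R}^N$, note that domain and codomain have the same finite dimension $N$, and conclude that bijectivity is equivalent to $\ker L=\{0\}$.

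The paper itself does not supply a proof of this proposition; it is stated as a well-known background fact (with no proof environment following it), so there is nothing to compare against. Your write-up would serve perfectly well as the omitted justification.
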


Next, let us consider the concept of $n$-independence (see \cite{E}, \cite{HJZ}).

\begin{definition}
A set of nodes ${\mathcal X}$ is called \emph{$n$-independent} if all its
nodes have $n$-fundamental polynomials. Otherwise, it is called \emph{$n$-dependent}.
\end{definition}

Fundamental polynomials are linearly independent. Therefore a necessary condition of $n$-independence of ${\mathcal X}_s$
is $s \leq N$.

\section{\label{2}  Some properties of $n$-independent nodes}

Let us start with the following simple 
\begin{lemma}[e.g., \cite{HM} Lemma 2.2]\label{XA}
Suppose that a node set ${\mathcal X}$ is $n$-independent and a node $A\notin \mathcal X$
has $n$-fundamental polynomial with
respect to the set ${\mathcal X}\cup \{A\}.$ Then the latter
node set is $n$-independent, too.
\end{lemma}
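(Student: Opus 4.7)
The plan is to exhibit an $n$-fundamental polynomial for every node of $\mathcal{X}\cup\{A\}$. For the node $A$ itself, such a polynomial is given by hypothesis; call it $p_A \in \Pi_n$, so that $p_A(A) = 1$ and $p_A$ vanishes on $\mathcal{X}$. The only real task is to produce, for each $B \in \mathcal{X}$, a polynomial in $\Pi_n$ that equals $1$ at $B$ and vanishes on $(\mathcal{X}\cup\{A\})\setminus\{B\}$.

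The natural idea is a single correction step. By $n$-independence of $\mathcal{X}$, fix any $n$-fundamental polynomial $q \in \Pi_n$ of $B$ with respect to $\mathcal{X}$, so that $q(B)=1$ and $q$ vanishes on $\mathcal{X}\setminus\{B\}$. The polynomial $q$ may fail to vanish at the extra node $A$, but this is exactly what $p_A$ is designed to fix. Define
\[
  r := q - q(A)\, p_A \in \Pi_n.
\]
Then $r(A) = q(A) - q(A)\cdot 1 = 0$; for the node $B$, since $p_A$ vanishes on $\mathcal{X}$ (in particular at $B$), we have $r(B) = q(B) - q(A)\cdot 0 = 1$; and for any other node $C \in \mathcal{X}\setminus\{B\}$, both $q(C)$ and $p_A(C)$ vanish, so $r(C)=0$. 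Hence $r$ is an $n$-fundamental polynomial of $B$ with respect to $\mathcal{X}\cup\{A\}$.

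There is essentially no obstacle here: the argument is a one-line linear combination that uses the hypotheses in the only way available. The only point worth emphasizing is that the subtraction is performed within $\Pi_n$, so the degree is preserved, and that the construction works uniformly for every $B\in\mathcal{X}$. This produces $n$-fundamental polynomials for all nodes of $\mathcal{X}\cup\{A\}$, which by definition establishes its $n$-independence.
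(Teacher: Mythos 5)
Your argument is correct and is the standard one-step correction proof of this lemma (which the paper cites from the literature without reproducing a proof): subtracting $q(A)\,p_A$ from the fundamental polynomial $q$ of $B$ with respect to $\mathcal X$ yields a fundamental polynomial of $B$ with respect to $\mathcal X\cup\{A\}$, and all verifications are as you state. Nothing is missing.
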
 Denote the distance between the points $A$ and $B$ by $\rho(A,B).$
Let us bring the following (see e.g., \cite{H82}, \cite{HakTor})
\begin{lemma} \label{eps'} Suppose that ${\mathcal X}_s=\{A_i\}_{i=1}^s$ is an $n$-independent set. Then there is a number $\epsilon>0$ such that
any set ${\mathcal X}_s'=\{A_i'\}_{i=1}^s,$ with the property that $\rho(A_i,A_i')<\epsilon,\ i=1,\ldots,s,$  is $n$-independent too.
\end{lemma}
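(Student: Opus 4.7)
The plan is to build fundamental polynomials for the perturbed set $\mathcal{X}_s'$ as explicit linear combinations of the fundamental polynomials of the original set, and to show that the required linear system is solvable once the perturbation is small enough. Since $\mathcal{X}_s$ is $n$-independent, for each $i$ I fix an $n$-fundamental polynomial $p_i \in \Pi_n$ of $A_i$ with respect to $\mathcal{X}_s$, so that $p_i(A_j) = \delta_{ij}$.

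Now, for each index $i$, I look for an $n$-fundamental polynomial of $A_i'$ with respect to $\mathcal{X}_s'$ inside the span of $\{p_1, \dots, p_s\}$. Writing a candidate as $q_i = \sum_{j=1}^s c_j^{(i)} p_j$, the required conditions $q_i(A_k') = \delta_{ik}$ translate into the linear system
\begin{equation*}
\sum_{j=1}^s p_j(A_k') \, c_j^{(i)} = \delta_{ik}, \qquad k=1,\ldots,s.
\end{equation*}
Let $M(\mathcal{X}_s')$ denote the $s \times s$ matrix with entries $M_{kj} = p_j(A_k')$. When the perturbed set coincides with $\mathcal{X}_s$, we have $M(\mathcal{X}_s) = I$, the identity matrix, by the fundamental property of the $p_j$'s.

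Each entry of $M(\mathcal{X}_s')$ is a continuous function of the nodes $A_k'$, because polynomial evaluation is continuous. Since the determinant is also continuous and $\det M(\mathcal{X}_s) = 1$, there exists $\epsilon > 0$ such that $|\rho(A_k, A_k') < \epsilon$ for all $k$ implies $\det M(\mathcal{X}_s') \neq 0$. For any such $\mathcal{X}_s'$, the system above has a unique solution $(c_j^{(i)})_{j=1}^s$ for every $i$, producing a polynomial $q_i \in \Pi_n$ that is an $n$-fundamental polynomial of $A_i'$ with respect to $\mathcal{X}_s'$. Hence $\mathcal{X}_s'$ is $n$-independent.

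I do not expect any genuine obstacle here: the argument is a standard openness argument using continuity of polynomial evaluation, the only subtle point being the choice to work inside the finite-dimensional span of the $p_j$'s rather than inside the whole of $\Pi_n$, which makes the reduction to the invertibility of the single matrix $M$ clean even when $\mathcal{X}_s$ is not $n$-poised.
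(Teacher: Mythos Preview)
Your proof is correct. The paper does not actually supply a proof of this lemma; it simply cites \cite{H82} and \cite{HakTor} and states the result. Your argument is the standard openness argument: the matrix $M(\mathcal{X}_s')$ depends continuously on the nodes and equals the identity at $\mathcal{X}_s$, so its determinant stays nonzero under small perturbations, yielding fundamental polynomials for the perturbed set. The choice to work inside $\operatorname{span}\{p_1,\dots,p_s\}$ rather than all of $\Pi_n$ is a clean way to reduce the question to a single square system, and it also implicitly guarantees that the perturbed nodes are distinct (two equal nodes would force two equal rows in $M$ and hence $\det M=0$).
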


Next result concerns the extension of $n$-independent sets  
\begin{lemma}[e.g., \cite{HJZ},  Lemma 2.1]\label{ext}
Any $n$-independent set ${\mathcal X}$ with $|{\mathcal X}|<N$ can be enlarged to an $n$-poised set. 
\end{lemma}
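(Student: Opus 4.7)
The plan is to enlarge $\mathcal{X}$ one node at a time, showing that each intermediate set remains $n$-independent, and then invoke the relationship between $n$-independence and $n$-poisedness when the size reaches $N$. The central claim to establish is the following single-step extension: if $\mathcal{X}$ is $n$-independent with $|\mathcal{X}| = s < N$, then there exists a node $A \notin \mathcal{X}$ such that $\mathcal{X} \cup \{A\}$ is also $n$-independent. Iterating this claim $N - s$ times produces an $n$-independent set of cardinality $N$, and the conclusion will follow from a short check that any $n$-independent set of size $N$ is automatically $n$-poised.

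For the single-step extension, I would consider the linear subspace
\begin{equation*}
 V_{\mathcal X} := \{p \in \Pi_n : p\vert_{\mathcal X} = 0\}.
\end{equation*}
Since vanishing at each node is a single linear condition, we have $\dim V_{\mathcal X} \geq N - s \geq 1$, so there is a nonzero polynomial $q \in V_{\mathcal X}$. Pick any point $A$ in the plane with $q(A) \neq 0$; since $q$ vanishes on $\mathcal{X}$, automatically $A \notin \mathcal{X}$. Then $(1/q(A))\,q$ is an $n$-fundamental polynomial for $A$ relative to $\mathcal{X} \cup \{A\}$, and Lemma~\ref{XA} immediately yields that $\mathcal{X} \cup \{A\}$ is $n$-independent.

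To close the argument, once we have extended $\mathcal{X}$ to a set $\mathcal{X}_N$ of cardinality $N$ that is still $n$-independent, I would observe that the $N$ fundamental polynomials $p_{A_i}^\star$ are linearly independent in the $N$-dimensional space $\Pi_n$, so they form a basis. Any $p \in \Pi_n$ vanishing on $\mathcal{X}_N$ can be expanded as $p = \sum_i c_i\, p_{A_i}^\star$, and evaluating at $A_j$ forces every coefficient to vanish, so $p = 0$. By Proposition~\ref{prp:int0}, $\mathcal{X}_N$ is $n$-poised.

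There is no real obstacle here; the proof is essentially a dimension count plus the prepared Lemma~\ref{XA}. The only point worth being careful about is confirming that the point $A$ chosen with $q(A) \neq 0$ genuinely lies outside $\mathcal{X}$, which is automatic from $q \in V_{\mathcal X}$, and that the final passage from $n$-independent of size $N$ to $n$-poised is legitimate via the fundamental-polynomial basis argument above.
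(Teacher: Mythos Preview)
Your argument is correct. The paper does not actually prove this lemma (it is quoted from \cite{HJZ}), but your single-step extension---choose a nonzero $q\in\Pi_n$ vanishing on $\mathcal X$, pick $A$ with $q(A)\neq 0$, and apply Lemma~\ref{XA}---is exactly the base case $m=1$ in the paper's proof of Lemma~\ref{extm}, so your approach matches the paper's own methods.
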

In the sequel we will need the following modification of the above result.
\begin{lemma}\label{extm}
Given $n$-independent sets ${\mathcal X}_{s_i},\ i=1,\ldots,m,$ where $|{\mathcal X}_{s_i}|={s_i}<N,$ a node $A,$ and any number $\epsilon>0.$ Then there is a node $A',$ such that 
$\rho(A,A')<\epsilon$ and each set ${\mathcal X}_{s_i}\cup \{A'\},\ i=1,\ldots,m,$ is $n$-independent.
\end{lemma}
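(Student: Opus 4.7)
The plan is to reduce the statement to a standard fact about algebraic curves being nowhere dense in the plane. For each $i$, I will exhibit a single nonzero polynomial $p_i\in\Pi_n$ vanishing on $\mathcal{X}_{s_i}$, and then pick $A'$ close to $A$ avoiding the common zero set of these finitely many polynomials.

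First, for each $i\in\{1,\ldots,m\}$, consider the linear space
\[
V_i := \{p\in\Pi_n : p\big\vert_{\mathcal{X}_{s_i}}=0\}.
\]
Since $\mathcal{X}_{s_i}$ is $n$-independent, the $s_i$ evaluation functionals on $\Pi_n$ corresponding to the nodes of $\mathcal{X}_{s_i}$ are linearly independent, so $\dim V_i = N - s_i$. Because $s_i<N$ by assumption, we have $\dim V_i\ge 1$, so we may pick a nonzero polynomial $p_i\in V_i$. Let $\Gamma_i := \{Q\in\mathbb{R}^2 : p_i(Q)=0\}$, which is an algebraic curve of degree at most $n$.

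Second, I claim that whenever a node $A'$ satisfies $A'\notin\Gamma_i$, the set $\mathcal{X}_{s_i}\cup\{A'\}$ is $n$-independent. Indeed, in that case the polynomial $p_i/p_i(A')\in\Pi_n$ takes the value $1$ at $A'$ and vanishes on $\mathcal{X}_{s_i}$, so it is an $n$-fundamental polynomial for $A'$ with respect to $\mathcal{X}_{s_i}\cup\{A'\}$. Lemma~\ref{XA} then yields the $n$-independence of $\mathcal{X}_{s_i}\cup\{A'\}$.

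Third, the union $\Gamma := \bigcup_{i=1}^m \Gamma_i$ is a finite union of algebraic curves of degree at most $n$, hence a proper closed subset of $\mathbb{R}^2$ with empty interior. Consequently, the open disk of radius $\epsilon$ around $A$ is not contained in $\Gamma$, and we may choose $A'$ in this disk with $A'\notin\Gamma$. Such an $A'$ satisfies $\rho(A,A')<\epsilon$ and, by the previous paragraph, makes every $\mathcal{X}_{s_i}\cup\{A'\}$ an $n$-independent set. There is no real obstacle here beyond the observation that a finite union of nowhere-dense sets is nowhere dense; the single-set case is essentially the content of Lemma~\ref{ext}, and the passage to finitely many sets is handled uniformly by this density argument.
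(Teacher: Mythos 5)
Your proof is correct, and it handles the multi-set case by a genuinely different (and leaner) mechanism than the paper. The core step for a single set is identical in both arguments: since $s_i<N$ there is a nonzero $p_i\in\Pi_n$ vanishing on $\mathcal{X}_{s_i}$, and any point $A'$ with $p_i(A')\neq 0$ acquires a fundamental polynomial, so Lemma~\ref{XA} gives $n$-independence of $\mathcal{X}_{s_i}\cup\{A'\}$ (note that your choice of $A'$ off $\Gamma$ automatically guarantees $A'\notin\mathcal{X}_{s_i}$, since $p_i$ vanishes on $\mathcal{X}_{s_i}$). Where you diverge is in passing from one set to $m$ sets: the paper runs an induction on $m$ and invokes the perturbation Lemma~\ref{eps'} to keep the first $m-1$ augmented sets $n$-independent while adjusting the candidate node for the $m$-th, whereas you choose one witness polynomial per set and pick $A'$ in the $\epsilon$-disk avoiding the finite union $\Gamma=\bigcup_i\Gamma_i$ of their zero sets, which is possible because a nonzero bivariate polynomial cannot vanish on an open set and a finite union of such zero sets still has empty interior. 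Your route dispenses with both the induction and the stability lemma, and produces all $m$ fundamental polynomials for $A'$ simultaneously; the paper's route is more roundabout but reuses machinery (Lemma~\ref{eps'}) that it needs elsewhere anyway. Both are complete proofs; yours is arguably the more transparent one.
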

\begin{proof} Let us use induction with respect to the number of sets: $m.$ Suppose that we have one set ${\mathcal X}_{s}.$ Since $s<N,$ there is a nonzero polynomial $p\in\Pi_n$ such that $p\big\vert_{{\mathcal X}_{s}}=0.$ Now evidently  there is a node $B\notin \mathcal X,$ such that $\rho(A,B)<\epsilon$ and $p(B)\neq 0.$ Thus $p$ is an $n$-fundamental polynomial of the node $B$
with
respect to the set ${\mathcal X}\cup \{B\}.$ Hence, in view of Lemma \ref{XA}, the set ${\mathcal X}_{s}\cup \{B\}$ is $n$-independent. Then, assume that Lemma is true in the case of $m-1$ sets, i.e.,  there is a node $B,$ such that 
$\rho(A,B)<(1/2)\epsilon$ and each set ${\mathcal X}_{s_i}\cup \{B\},\ i=1,\ldots,m-1,$ is $n$-independent. In view of Lemma \ref{eps'}, there is a number $\epsilon'<(1/2)\epsilon$ such that for any $C,$ with $\rho(C,B)<\epsilon',$ each set ${\mathcal X}_{s_i}\cup \{C\},\ i=1,\ldots,m-1,$ is $n$-independent. Next, in view of first step of induction, there is a node $A'$ such that $\rho(A',B)<(1/2)\epsilon$ and the set ${\mathcal X}_{s_m}\cup \{A'\}$ is $n$-independent. Now, it is easily seen that $A'$ is a desirable node.
\end{proof}

Denote the linear space of polynomials of total degree at most $n$
vanishing on ${\mathcal X}$ by
\begin{equation*}{{\mathcal P}}_{n,{\mathcal X}}=\left\{p\in \Pi_n
: p\big\vert_{\mathcal X}=0\right\}.
\end{equation*}
The following two propositions are well-known.
\begin{proposition} [e.g., \cite{HJZ}] \label{PnX} For any node set ${\mathcal X}$ we have that
\begin{equation*}\label{eq:theta1} \dim {{\mathcal P}}_{n,{\mathcal X}} = N - |{\mathcal Y}|,\end{equation*}
where ${\mathcal Y}$ is a maximal $n$-independent subset of ${\mathcal X}.$
\end{proposition}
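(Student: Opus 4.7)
The plan is to reduce the computation of $\dim {\mathcal P}_{n,{\mathcal X}}$ to the case of an $n$-independent set, and then compute it there by a standard rank-nullity argument. Specifically, I would prove the two equalities
$$
\dim {\mathcal P}_{n,{\mathcal X}} \;=\; \dim {\mathcal P}_{n,{\mathcal Y}} \;=\; N - |{\mathcal Y}|.
$$

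For the first equality, the inclusion ${\mathcal P}_{n,{\mathcal X}} \subseteq {\mathcal P}_{n,{\mathcal Y}}$ is immediate since ${\mathcal Y}\subseteq {\mathcal X}$. For the reverse inclusion, I would fix an arbitrary $p \in {\mathcal P}_{n,{\mathcal Y}}$ and an arbitrary node $A \in {\mathcal X}\setminus {\mathcal Y}$, and show $p(A)=0$. Suppose for contradiction that $p(A)\neq 0$; then, after normalization, $p$ is an $n$-fundamental polynomial of $A$ with respect to ${\mathcal Y}\cup\{A\}$. Lemma \ref{XA} would then force ${\mathcal Y}\cup\{A\}$ to be $n$-independent, contradicting the maximality of ${\mathcal Y}$ as an $n$-independent subset of ${\mathcal X}$. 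Hence $p(A)=0$ for every such $A$, so $p \in {\mathcal P}_{n,{\mathcal X}}$.

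For the second equality, consider the evaluation map $E: \Pi_n \to \mathbb{R}^{|{\mathcal Y}|}$ given by $E(p) = \bigl(p(A)\bigr)_{A\in{\mathcal Y}}$. By construction $\ker E = {\mathcal P}_{n,{\mathcal Y}}$, so by rank-nullity it suffices to show that $E$ is surjective. This is exactly the $n$-independence of ${\mathcal Y}$: for each $A\in{\mathcal Y}$ the fundamental polynomial $p_A^\star\in\Pi_n$ exists and satisfies $E(p_A^\star) = e_A$, the standard basis vector indexed by $A$. Therefore the image of $E$ contains a basis of $\mathbb{R}^{|{\mathcal Y}|}$, giving $\operatorname{rank} E = |{\mathcal Y}|$ and $\dim {\mathcal P}_{n,{\mathcal Y}} = N - |{\mathcal Y}|$.

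There is no real obstacle here; the only step that requires any thought is the maximality argument, and that reduces to a direct application of Lemma \ref{XA}. The two halves combine to give the claimed formula.
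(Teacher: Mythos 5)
Your argument is correct; the paper itself states this proposition as well known (citing \cite{HJZ}) and gives no proof, and your two-step reduction --- first showing ${\mathcal P}_{n,{\mathcal X}}={\mathcal P}_{n,{\mathcal Y}}$ via Lemma~\ref{XA} and maximality, then applying rank--nullity to the evaluation map on ${\mathcal Y}$, whose surjectivity is exactly the existence of the fundamental polynomials --- is the standard proof. As a small bonus, your argument also shows why the right-hand side is independent of the choice of maximal $n$-independent subset ${\mathcal Y}$, since the left-hand side does not depend on it.
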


\begin{proposition}\label{maxline}
Assume that $\ell$ is a line and ${\mathcal X}_{n+1}$ is any subset of
$\ell$ containing $n+1$ points. Then we have that
$$p\in {\Pi_{n}}\quad \text{and} \quad p|_{{\mathcal X}_{n+1}}= 0 \; \Longrightarrow \quad p = \ell r,$$
\end{proposition}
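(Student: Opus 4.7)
The plan is to reduce to a univariate vanishing argument by choosing coordinates adapted to the line. First I would perform an affine change of variables that sends $\ell$ to the $x$-axis, so that (abusing notation) the linear polynomial $\ell$ becomes $y$ and the $n+1$ points of ${\mathcal X}_{n+1}$ become $(x_1,0),\ldots,(x_{n+1},0)$ with the $x_i$ pairwise distinct. Since both the statement and the conclusion are preserved under affine changes of coordinates (the degree of $p$ is preserved, and $p=\ell r$ transforms consistently), there is no loss of generality in this reduction.

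Next I would split off the dependence on $y$. Any $p\in\Pi_n$ can be written uniquely as
\[
p(x,y)=p_0(x)+y\,q(x,y),
\]
where $p_0(x)=p(x,0)$ is a univariate polynomial of degree $\leq n$ and $q(x,y)\in\Pi_{n-1}$ (this is because every monomial $x^i y^j$ with $i+j\leq n$ and $j\geq 1$ contributes, after pulling out one factor of $y$, a monomial of total degree $\leq n-1$ to $q$).

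The hypothesis $p|_{{\mathcal X}_{n+1}}=0$ forces $p_0(x_i)=0$ for $i=1,\ldots,n+1$, so the univariate polynomial $p_0$ of degree $\leq n$ has $n+1$ distinct roots and hence vanishes identically. This leaves $p(x,y)=y\,q(x,y)=\ell(x,y)\,q(x,y)$ with $q\in\Pi_{n-1}$, which is the desired factorization. Translating back through the inverse of the affine change of coordinates yields $p=\ell r$ with $r\in\Pi_{n-1}$.

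There is no real obstacle here; the only point requiring a small amount of care is the bookkeeping on degrees — that splitting off one factor of $y$ from $p\in\Pi_n$ leaves a cofactor in $\Pi_{n-1}$ rather than $\Pi_n$ — but this is immediate from the total-degree definition. Essentially the proposition is the bivariate instance of the fact that a polynomial vanishing on a linear subvariety is divisible by the defining linear form, proved here by restricting to that line and invoking the univariate fundamental theorem of algebra.
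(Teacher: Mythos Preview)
Your argument is correct. The reduction by an affine change of coordinates to $\ell=y$, the splitting $p(x,y)=p(x,0)+y\,q(x,y)$ with $q\in\Pi_{n-1}$, and the univariate root-counting to kill $p(x,0)$ are all valid and cleanly justified.

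As for comparison: the paper does not actually supply a proof of this proposition. It is introduced with the phrase ``the following two propositions are well-known'' and is stated without argument, serving only as background for the generalization in Proposition~\ref{maxcurve}. So there is nothing in the paper to compare your approach against; you have simply filled in the standard proof that the authors chose to omit.
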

\noindent where $r \in \Pi_{n-1}.$

A plane algebraic curve is the zero set of some bivariate
polynomial. To simplify notation, we shall use the same letter $p$,
say, to denote the polynomial $p$ of degree $\ge 1$ and the curve given by the
equation $p(x,y)=0$. 

\noindent Set $d(n, k) := N_n - N_{n-k} = (1/2) k(2n+3-k).$
The following is a generalization of Proposition \ref{maxline}.
\begin{proposition}[\cite{Raf}, Prop. 3.1]\label{maxcurve}
Let $q$ be an algebraic curve of degree $k \le n$ without multiple
components. Then the following hold.\\
i) Any subset of $q$ containing more than $d(n,k)$ nodes is
$n$-dependent.\\
ii) Any subset ${\mathcal X}_d$ of $q$ containing exactly $d=d(n,k)$
nodes is $n$-independent if and only if the following condition
holds:
\begin{equation}\label{p=qr} p\in {\Pi_{n}}\quad \text{and} \quad p|_{{\mathcal X}_d} = 0 \Longrightarrow  p = qr,
\end{equation}
\end{proposition}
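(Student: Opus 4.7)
The plan is a uniform dimension count built around the subspace $q\cdot\Pi_{n-k}:=\{qr:r\in\Pi_{n-k}\}\subseteq\Pi_n$. Since the polynomial ring is an integral domain and $q\neq 0$, the multiplication map $r\mapsto qr$ is injective from $\Pi_{n-k}$ into $\Pi_n$, so $\dim(q\cdot\Pi_{n-k})=N_{n-k}$. Because $q$ vanishes on every point of the curve $q=0$, every polynomial of the form $qr$ vanishes on any subset $\mathcal{X}$ of that curve, so
$$q\cdot\Pi_{n-k}\subseteq\mathcal{P}_{n,\mathcal{X}}\qquad\text{and hence}\qquad\dim\mathcal{P}_{n,\mathcal{X}}\ge N_{n-k}.$$
The assumption that $q$ has no multiple components enters only implicitly, to guarantee that the degree $k$ is intrinsic to the curve and not an artifact of the defining polynomial.

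For part (i), I would combine this bound with Proposition \ref{PnX}: if $\mathcal{Y}$ is a maximal $n$-independent subset of $\mathcal{X}$, then
$$|\mathcal{Y}|=N_n-\dim\mathcal{P}_{n,\mathcal{X}}\le N_n-N_{n-k}=d(n,k).$$
Whenever $|\mathcal{X}|>d(n,k)$ this forces $|\mathcal{Y}|<|\mathcal{X}|$, so $\mathcal{Y}\neq\mathcal{X}$; since every subset of an $n$-independent set is itself $n$-independent (the restriction of a fundamental polynomial remains fundamental), this means $\mathcal{X}$ cannot be $n$-independent, \ie, $\mathcal{X}$ is $n$-dependent.

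For part (ii), set $|\mathcal{X}_d|=d=d(n,k)$. The same formula yields the chain of equivalences: $\mathcal{X}_d$ is $n$-independent iff $|\mathcal{Y}|=d$ iff $\dim\mathcal{P}_{n,\mathcal{X}_d}=N_n-d=N_{n-k}$. Since $q\cdot\Pi_{n-k}\subseteq\mathcal{P}_{n,\mathcal{X}_d}$ and the former already has dimension $N_{n-k}$, equality of dimensions is equivalent to set-theoretic equality $\mathcal{P}_{n,\mathcal{X}_d}=q\cdot\Pi_{n-k}$, which is precisely the implication \eqref{p=qr}. I do not anticipate any serious obstacle; the whole argument rests on injectivity of multiplication by $q$ (immediate from $q\neq 0$ and the integral-domain property) together with Proposition \ref{PnX}, and the only care required is in correctly translating between $n$-independence of $\mathcal{X}$ and the dimension of $\mathcal{P}_{n,\mathcal{X}}$.
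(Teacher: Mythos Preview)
Your argument is correct. The paper does not actually prove Proposition~\ref{maxcurve}; it is quoted from \cite{Raf}. What you have written is essentially the standard dimension-count proof: the inclusion $q\cdot\Pi_{n-k}\subseteq\mathcal{P}_{n,\mathcal{X}}$ gives the lower bound $\dim\mathcal{P}_{n,\mathcal{X}}\ge N_{n-k}$, and Proposition~\ref{PnX} converts this into the upper bound $|\mathcal{Y}|\le d(n,k)$ for any $n$-independent subset, yielding (i); for (ii), equality of dimensions is equivalent to the equality $\mathcal{P}_{n,\mathcal{X}_d}=q\cdot\Pi_{n-k}$, which is exactly condition~\eqref{p=qr}. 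Each step is justified, including your remark that subsets of $n$-independent sets are $n$-independent.

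One small comment on the role of ``no multiple components'': your proof in fact goes through verbatim without that hypothesis, since injectivity of $r\mapsto qr$ only needs $q\neq 0$. The hypothesis is there so that the integer $k$ is an invariant of the curve (the point set $q=0$) rather than of the chosen defining polynomial; with a repeated factor the same point set would also be a curve of strictly smaller degree, and part~(i) would then give a sharper bound. You note this, and that is the right way to read the assumption.
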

\noindent where $r \in \Pi_{n-k}.$

Thus, according to
Proposition~\ref{maxcurve}, i), at most $d(n,k)$ nodes of $\mathcal
X$ can lie in the curve $q$ of degree $k \le n$.
This motivates the following definition.
\begin{definition}[\cite{Raf}, Def. 3.1]\label{def:maximal}
Given an $n$-independent set of nodes $\mathcal X_s,$ with $s\ge
d(n,k).$ A curve of degree $k \le n$ passing through $d(n,k)$ points
of $\mathcal X_s,$ is called maximal.
\end{definition}

\noindent We say that a node $A$ of an $n$-poised set ${\mathcal X}$ \emph{uses} a line $\ell$ if the latter divides the fundamental polynomial of $A,$ i.e.,
$p^\star_{A} = \ell q,$
for some $q \in \Pi_{n-1}.$

Next, we bring a characterization of maximal curves:

\begin{proposition} [\cite{Raf}, Prop. 3.3] \label{maxcor}  Let a node set ${\mathcal X}$ be n-poised. Then
a polynomial $\mu$ of degree $k,\ k\le n,$ is a maximal curve if and only if it is
used by any node in  ${\mathcal X}\setminus \mu.$
\end{proposition}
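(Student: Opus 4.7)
The plan is to establish both directions by applying Proposition \ref{maxcurve} to $\mu$ and comparing the sizes of the complementary subsets $\mathcal{X}' := \mathcal{X} \cap \mu$ and $\mathcal{X}'' := \mathcal{X} \setminus \mu$, which together partition the $n$-poised set $\mathcal{X}$. The two inequalities $|\mathcal{X}'| \le d(n,k)$ and $|\mathcal{X}''| \le N_{n-k} = N - d(n,k)$ will each play a role, the former (from Proposition \ref{maxcurve}(i)) being used in the converse.

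For the direct implication, suppose $\mu$ is maximal. By definition $|\mathcal{X}'| = d(n,k)$, and since $\mathcal{X}$ is $n$-poised, its subset $\mathcal{X}'$ is $n$-independent. Hence Proposition \ref{maxcurve}(ii) applies and yields: every $p \in \Pi_n$ with $p|_{\mathcal{X}'} = 0$ factors as $p = \mu r$ with $r \in \Pi_{n-k}$. For any $A \in \mathcal{X}''$, the fundamental polynomial $p_A^\star$ vanishes on $\mathcal{X}\setminus\{A\} \supseteq \mathcal{X}'$, so $p_A^\star = \mu r_A$ for some $r_A \in \Pi_{n-k}$, which is exactly the statement that $A$ uses $\mu$.

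For the converse, assume $p_A^\star = \mu r_A$ with $r_A \in \Pi_{n-k}$ for every $A \in \mathcal{X}''$. The key observation is that $\mu$ vanishes nowhere on $\mathcal{X}''$: evaluating the factorization at nodes of $\mathcal{X}''$ therefore forces $r_A(A) \ne 0$ and $r_A(B) = 0$ for each $B \in \mathcal{X}'' \setminus \{A\}$. Thus a scalar multiple of $r_A$ is an $(n-k)$-fundamental polynomial for $A$ with respect to $\mathcal{X}''$, so $\mathcal{X}''$ is $(n-k)$-independent and $|\mathcal{X}''| \le N_{n-k}$, giving $|\mathcal{X}'| \ge d(n,k)$. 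Combined with the matching upper bound from Proposition \ref{maxcurve}(i), we conclude $|\mathcal{X}'| = d(n,k)$, so $\mu$ is maximal.

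The main point to get right, though hardly a serious obstacle, is the bookkeeping in the converse: one must extract from the structural identity $p_A^\star = \mu r_A$ the fact that the quotient $r_A$ solves an $(n-k)$-interpolation problem on the smaller set $\mathcal{X}''$, which is exactly what transfers the counting from $\Pi_n$ on $\mathcal{X}$ to $\Pi_{n-k}$ on $\mathcal{X}''$. The potential nuisance of $\mu$ carrying multiple components is sidestepped by applying Proposition \ref{maxcurve}(i) to the reduced part of $\mu$, which has degree at most $k$ and the same zero locus.
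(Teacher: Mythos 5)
The paper states this proposition only by citation to [Raf] and gives no proof of its own, so there is nothing to compare against; judged on its own, your argument is correct and is the standard one. Both directions work: the forward direction via Proposition \ref{maxcurve}(ii) applied to the $n$-independent set $\mathcal{X}\cap\mu$ of size $d(n,k)$, and the converse via the observation that the quotients $r_A$ are $(n-k)$-fundamental polynomials for $\mathcal{X}\setminus\mu$, giving $|\mathcal{X}\setminus\mu|\le N_{n-k}$ and hence $|\mathcal{X}\cap\mu|\ge d(n,k)$, matched by the upper bound from Proposition \ref{maxcurve}(i). The only point worth spelling out slightly more is that the no-multiple-component hypothesis of Proposition \ref{maxcurve} is also needed in the \emph{forward} direction; it holds there for the same reason you give for the converse, since a multiple component would force the $d(n,k)$ nodes onto the reduced curve of degree $k'<k$, contradicting part (i).
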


\begin{proposition}[\cite{HakTor}, Prop. 3.5] \label{extcurve} Assume that $\sigma$ is an algebraic curve  of degree $k,$ without multiple components, and
${\mathcal X}_s\subset \sigma$ is any $n$-independent node set of cardinality
$s,\ s<d(n,k).$ Then the set ${\mathcal X}_s$ can be extended to a maximal
$n$-independent set ${\mathcal X}_{d}\subset \sigma$ of cardinality $d=d(n,k)$.
\end{proposition}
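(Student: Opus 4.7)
The natural plan is to extend $\mathcal{X}_s$ one node at a time along $\sigma$, preserving $n$-independence at every step. Thus it suffices to establish a single extension step: whenever $\mathcal{Y} \subseteq \sigma$ is $n$-independent with $|\mathcal{Y}| < d(n,k)$, there exists $A \in \sigma \setminus \mathcal{Y}$ such that $\mathcal{Y} \cup \{A\}$ is still $n$-independent. Iterating $d(n,k) - s$ times then produces the desired maximal set $\mathcal{X}_d \subseteq \sigma$.

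For the one-step extension, by Lemma~\ref{XA} it is enough to exhibit a polynomial $p \in {\mathcal P}_{n,\mathcal{Y}}$ together with a node $A \in \sigma \setminus \mathcal{Y}$ satisfying $p(A) \neq 0$; then a scalar multiple of $p$ is an $n$-fundamental polynomial of $A$ with respect to $\mathcal{Y} \cup \{A\}$. To produce $p$, I would argue by a dimension count. By Proposition~\ref{PnX}, $\dim {\mathcal P}_{n,\mathcal{Y}} = N - |\mathcal{Y}|$, while the linear subspace $\sigma \cdot \Pi_{n-k} \subseteq {\mathcal P}_{n,\mathcal{Y}}$ (every element of which vanishes identically on $\sigma$, hence on $\mathcal{Y}$) has dimension $N_{n-k} = N - d(n,k)$. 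Since $|\mathcal{Y}| < d(n,k)$, the containment is strict, so one can pick $p \in {\mathcal P}_{n,\mathcal{Y}} \setminus \sigma \cdot \Pi_{n-k}$. In particular, $\sigma$ does not divide $p$.

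Finding $A$ is then the core geometric step. Using the hypothesis that $\sigma$ has no multiple components, together with $\sigma \nmid p$, a Bezout-type argument applied to $p$ against each irreducible factor of $\sigma$ (none of which divides $p$, unless one does — in which case the remaining factors suffice) shows that $p$ vanishes on at most finitely many points of $\sigma$. This leaves an infinite reservoir of candidates on $\sigma$ outside the finite set $\mathcal{Y} \cup (\{p = 0\} \cap \sigma)$, from which $A$ can be chosen. This last step is the main obstacle: one must exploit the no-multiple-components hypothesis to exclude the possibility that $p$ vanishes on an entire (real) component of $\sigma$ without being divisible by its defining equation, so that the intersection of $p$ with $\sigma$ is genuinely a finite set. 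Once this is secured, the induction goes through and delivers the maximal $\mathcal{X}_d \subseteq \sigma$.
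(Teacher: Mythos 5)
Your argument is correct and is essentially the standard proof: the paper itself does not prove Proposition~\ref{extcurve} but imports it from \cite{HakTor}, and the proof there runs exactly along your lines --- iterate a one-node extension, obtain $p\in{\mathcal P}_{n,{\mathcal Y}}$ with $\sigma\nmid p$ from the dimension count $\dim{\mathcal P}_{n,{\mathcal Y}}=N-|{\mathcal Y}|>N_{n-k}=\dim\bigl(\sigma\,\Pi_{n-k}\bigr)$, and then pick $A$ on an irreducible component of $\sigma$ whose defining factor does not divide $p$, invoking Lemma~\ref{XA}. Two small points of hygiene: the conclusion of your B\'ezout step should be that $p$ has finitely many zeros on \emph{that particular component}, not on all of $\sigma$ (components dividing $p$ are of course entirely contained in $\{p=0\}$), which is all you need; and the no-multiple-components hypothesis is used precisely so that $\sigma\nmid p$ forces some irreducible factor not to divide $p$. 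The residual worry about a real component with only finitely many real points is an implicit nondegeneracy assumption built into the statement itself (otherwise $\sigma$ could not carry $d(n,k)$ independent nodes at all), not a defect of your proof.
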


Finally, let us bring a well-known
\begin{lemma} \label{2cor} Suppose that $m$ linearly independent curves pass through all the nodes of ${\mathcal X}.$
Then for any node $A\notin {\mathcal X}$ there are $m-1$ linearly independent curves, belonging to the linear span of given curves, passing through $A$ and all the nodes of ${\mathcal X}.$
\end{lemma}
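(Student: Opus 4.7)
The plan is to reduce the statement to a one-line application of the rank--nullity theorem. I would let $V$ denote the $m$-dimensional subspace of the appropriate ambient $\Pi_k$ spanned by the given $m$ linearly independent curves; by hypothesis every $p\in V$ vanishes on $\mathcal{X}$, so $V\subseteq {\mathcal P}_{k,\mathcal{X}}$ for the $k$ equal to the maximum of the degrees of the given curves.

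Next, I would introduce the evaluation functional $\phi_A\colon V\to\mathbb{R}$ defined by $\phi_A(p):=p(A)$, which is clearly linear. Since $\operatorname{im}\phi_A\subseteq\mathbb{R}$ has dimension at most $1$, the rank--nullity theorem gives $\dim\ker\phi_A\ge m-1$. Any $m-1$ linearly independent elements of $\ker\phi_A$ then supply the curves required by the lemma: they lie in the linear span of the original $m$ curves, they vanish on $\mathcal{X}$ (because all of $V$ does), and they vanish at $A$ by construction.

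There is no real obstacle here; the only point worth flagging is that a linear combination of polynomials of possibly different degrees must be interpreted in a common ambient $\Pi_k$ (taking $k$ to be the maximum of the degrees of the given $m$ curves) so that \emph{linear independence} and \emph{linear span} of curves in the statement have an unambiguous meaning. Once this convention is fixed, the argument above is complete and requires nothing beyond elementary linear algebra.
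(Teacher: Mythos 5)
Your proof is correct: the rank--nullity argument applied to the evaluation functional $\phi_A$ on the $m$-dimensional span is exactly the standard argument behind this lemma, which the paper states as ``well-known'' without proof. Your remark about fixing a common ambient space $\Pi_k$ so that linear independence and span are well defined is the right (and only) point of care here.
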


\section{\label{4}  Main result}
Let us start with the following result from \cite{HT} (see also, \cite{BHT}).
\begin{theorem}[\cite{HT}, Thm. 1] \label{ht}
Assume that ${\mathcal X}$ is an $n$-independent set of $d(n, k-1)+2$
nodes lying in a curve of degree $k$ with $k\le n.$ Then the curve
is determined uniquely by these nodes. 
\end{theorem}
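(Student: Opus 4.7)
The plan is to argue by contradiction using a dimension count inside $\mathcal{P}_{n,\mathcal{X}}$. Suppose two linearly independent polynomials $q_1, q_2 \in \Pi_k$ both vanish on $\mathcal{X}$; if either has degree less than $k$, I first multiply it by a polynomial of the missing degree chosen coprime to the other $q_i$, preserving linear independence and achieving $\deg q_1 = \deg q_2 = k$. I will then exhibit a subspace of $\mathcal{P}_{n,\mathcal{X}}$ whose dimension strictly exceeds $\dim \mathcal{P}_{n,\mathcal{X}}$, which is the contradiction.

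By Proposition \ref{PnX} together with the $n$-independence of $\mathcal{X}$, one has $\dim \mathcal{P}_{n,\mathcal{X}} = N_n - |\mathcal{X}| = N_n - d(n,k-1) - 2 = N_{n-k+1} - 2$. The two subspaces $q_1 \Pi_{n-k}$ and $q_2 \Pi_{n-k}$ are contained in $\mathcal{P}_{n,\mathcal{X}}$ and each has dimension $N_{n-k}$ (multiplication by a nonzero polynomial is injective). To control the dimension of their sum I use the gcd: set $\alpha = \gcd(q_1,q_2)$ of degree $r$, and write $q_i = \alpha \beta_i$ with $\gcd(\beta_1,\beta_2)=1$, so that $r \le k-1$ by linear independence. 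A standard UFD argument shows that an element of $\Pi_n$ lies in both $q_i \Pi_{n-k}$ iff it is divisible by $\operatorname{lcm}(q_1,q_2) = \alpha\beta_1\beta_2$, which has degree $2k-r$; hence the intersection equals $\alpha\beta_1\beta_2\,\Pi_{n-2k+r}$, of dimension $N_{n-2k+r}$ (with the convention $N_m = 0$ for $m<0$), and $\dim(q_1\Pi_{n-k} + q_2\Pi_{n-k}) = 2N_{n-k} - N_{n-2k+r}$.

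The decisive step, and the one I expect to be the main obstacle, is to verify that $2N_{n-k} - N_{n-2k+r} > N_{n-k+1} - 2$ for every $r \in \{0,1,\ldots,k-1\}$. Using $N_{n-k+1} = N_{n-k} + (n-k+2)$, this reduces to the elementary inequality $N_{n-k} - N_{n-2k+r} > n-k$. The tightest case is $r = k-1$, where the left-hand side counts the monomials of degree exactly $n-k$, giving $n-k+1 > n-k$ by $1$; for any smaller $r$ the left-hand side only grows (or $N_{n-2k+r}$ drops to $0$ outright), strengthening the inequality. The care needed is mostly in handling this boundary $r = k-1$ where the gap is only $1$, and in treating the degenerate range $n < 2k-r$ uniformly.
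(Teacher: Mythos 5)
Your argument is correct and complete. Note that the paper does not actually prove this statement --- it is quoted from the reference [HT] as a known result --- so there is no in-paper proof to compare against; you have supplied a self-contained argument built only from Proposition \ref{PnX}. The structure is sound: since ${\mathcal X}$ is $n$-independent, $\dim {\mathcal P}_{n,{\mathcal X}}=N_n-|{\mathcal X}|=N_{n-k+1}-2$; two linearly independent $q_1,q_2\in\Pi_k$ of exact degree $k$ vanishing on ${\mathcal X}$ give $q_1\Pi_{n-k}+q_2\Pi_{n-k}\subseteq{\mathcal P}_{n,{\mathcal X}}$; writing $q_i=\alpha\beta_i$ with $\alpha=\gcd(q_1,q_2)$ of degree $r\le k-1$, the UFD argument correctly identifies the intersection as $\alpha\beta_1\beta_2\,\Pi_{n-2k+r}$, so the sum has dimension $2N_{n-k}-N_{n-2k+r}$, and the inequality $N_{n-k}-N_{n-2k+r}>n-k$ holds for all admissible $r$, with the extremal case $r=k-1$ giving the margin $1$ exactly as you say; the degenerate range $n-2k+r<0$ reduces to $N_{n-k}>n-k$, which is immediate. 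The preprocessing step (padding a lower-degree factor by a polynomial coprime to the other curve) is legitimate and also disposes of the possibility of a second curve of degree strictly less than $k$, so uniqueness is obtained among all curves of degree at most $k$, which is the strongest reading of the statement. The only stylistic remark is that the exact-degree normalization is what guarantees $\deg\beta_i=k-r$ and hence $\deg(\alpha\beta_1\beta_2)=2k-r$; you flag this implicitly, and it is the one place where omitting the preprocessing would genuinely break the count.
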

Next result in this series is the following
\begin{theorem}[\cite{HakTor}, Thm. 4.2]\label{mainth}
Assume that ${\mathcal X}$ is an $n$-independent set of $d(n, k-1)+1$
nodes with $k\le n-1.$
Then two different curves of degree
$k$ pass through all the nodes of ${\mathcal X}$ if and only if all the nodes of
${\mathcal X}$ but one lie in a
maximal curve of degree $k-1.$
\end{theorem}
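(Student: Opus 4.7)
The plan is to settle the trivial ``if'' direction first and then work through the converse. If $\mathcal X=\{A\}\cup\mathcal X'$ with $\mathcal X'$ contained in a maximal curve $\mu$ of degree $k-1,$ then for any two distinct lines $\ell_1,\ell_2$ through $A$ the products $\mu\ell_1$ and $\mu\ell_2$ are two different degree-$k$ curves vanishing at every node of $\mathcal X.$

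For the ``only if'' direction, suppose $\sigma_1,\sigma_2$ are two non-proportional degree-$k$ curves through $\mathcal X.$ Set $\tau:=\gcd(\sigma_1,\sigma_2),$ of degree $j,$ and write $\sigma_i=\tau\rho_i$ with $\rho_1,\rho_2$ coprime of degree $s:=k-j\ge 1.$ The goal is to force $s=1.$ Once granted, $\sigma_i=\tau\ell_i$ with distinct lines $\ell_i,$ so $\mathcal X\subseteq\tau\cup\{\ell_1\cap\ell_2\}.$ Because $|\mathcal X|=d(n,k-1)+1$ strictly exceeds the bound $d(n,k-1)$ from Proposition \ref{maxcurve}(i) for any curve of degree at most $k-1,$ the lines $\ell_1,\ell_2$ cannot be parallel and their intersection $P$ must belong to $\mathcal X.$ The $d(n,k-1)$ remaining nodes lie on $\tau,$ and a second use of Proposition \ref{maxcurve}(i), together with the strict monotonicity of $d(n,\cdot),$ forces $\tau$ to be squarefree of degree exactly $k-1$ and hence a maximal curve containing $\mathcal X\setminus\{P\},$ which is the asserted structure.

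The bulk of the proof is to establish $s=1.$ I would consider the linear subspace
\[
R=\{r_1\sigma_1+r_2\sigma_2:r_1,r_2\in\Pi_{n-k}\}\subseteq\mathcal P_{n,\mathcal X}.
\]
Because $\rho_1,\rho_2$ are coprime, a syzygy $r_1\sigma_1+r_2\sigma_2=0$ with $r_i\in\Pi_{n-k}$ forces $r_1=c\rho_2$ and $r_2=-c\rho_1$ for some $c\in\Pi_{n-k-s},$ so $\dim R=2N_{n-k}-N_{n-k-s}$ with the convention $N_i=0$ for $i<0.$ On the other hand, $n$-independence of $\mathcal X$ together with Proposition \ref{PnX} gives $\dim\mathcal P_{n,\mathcal X}=N_n-|\mathcal X|=N_{n-k+1}-1.$ Writing $R\subseteq\mathcal P_{n,\mathcal X}$ as an inequality and simplifying yields
\[
s\bigl(2(n-k)+3-s\bigr)\le 2(n-k)+2,
\]
which a direct check against $s=1,2,\ldots$ under $k\le n-1$ shows can hold only for $s=1.$

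The main obstacle is exactly this dimension count. A naive B\'ezout estimate $|\mathcal X|\le d(n,\deg\tau_{\mathrm{red}})+(k-j)^2$ gives $s=1$ for a large range of parameters, but it becomes tight in borderline cases (e.g.\ $k=n-1$) and fails to conclude. The syzygy/dimension argument above exploits the $n$-independence of $\mathcal X$ itself, not merely the $n$-independence inherited by its subsets, and this is exactly the extra input needed to close the gap uniformly for $k\le n-1.$
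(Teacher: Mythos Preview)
The paper does not prove this statement: it is quoted from \cite{HakTor} as a known result, and no argument for it appears in the present paper. So there is no ``paper's own proof'' to compare against directly.

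That said, your proposal is correct. The syzygy computation is the key point and it is right: since $\rho_1,\rho_2$ are coprime in the UFD $k[x,y]$, the kernel of $(r_1,r_2)\mapsto r_1\sigma_1+r_2\sigma_2$ on $\Pi_{n-k}\times\Pi_{n-k}$ is exactly $\{(c\rho_2,-c\rho_1):c\in\Pi_{n-k-s}\}$, whence $\dim R=2N_{n-k}-N_{n-k-s}$. Comparing with $\dim\mathcal P_{n,\mathcal X}=N_{n-k+1}-1$ (from Proposition~\ref{PnX} and $n$-independence) and setting $m=n-k\ge 1$, the inequality factors as $-(s-1)(s-2m-2)\le 0$, so for $1\le s\le m+2$ it forces $s=1$. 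One small caveat: the displayed inequality uses the closed form for $N_{m-s}$, which matches the convention $N_i=0$ only for $m-s\ge -2$; when $s\ge m+3$ the correct inequality reads $2N_m\le N_{m+1}-1$, i.e.\ $(m+2)(m-1)\le -2$, which also fails for $m\ge 1$. Hence $s=1$ in all cases, and your wrap-up (forcing $\ell_1\cap\ell_2\in\mathcal X$, then $\tau$ squarefree of degree exactly $k-1$ and maximal via Proposition~\ref{maxcurve}(i)) is fine.

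For perspective, the paper's proof of its own main theorem (the $d(n,k-2)+2$ case with four curves) proceeds by a completely different route: it enlarges $\mathcal X$ by three auxiliary collinear nodes, extends to a maximal $n$-independent set on one of the given curves, and then factors via Proposition~\ref{maxcurve}(ii) using a carefully arranged product of lines. Your syzygy/dimension argument is more algebraic and more direct for the two-curve statement; its limitation is that for four curves the syzygies among $\sigma_1,\dots,\sigma_4$ are no longer controlled by a single gcd, which is presumably why the paper (and \cite{HakTor}) resort to the geometric extension method instead.
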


Now let us present the main result of this paper:
\begin{theorem}\label{mainth}
Assume that ${\mathcal X}$ is an $n$-independent set of $d(n, k-2)+2$
nodes with $k\le n-1.$
Then four linearly independent curves of degree less than or equal to
$k$ pass through all the nodes of ${\mathcal X}$ if and only if all the nodes of
${\mathcal X}$ but two lie in a
maximal curve of degree $k-2.$
\end{theorem}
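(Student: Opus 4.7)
My plan is to prove the two implications separately.

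\emph{Sufficiency.} Set $\mathcal X_0:=\mathcal X\cap\mu$, with $\mu$ a maximal curve of degree $k-2$, and $\{A,B\}:=\mathcal X\setminus\mathcal X_0$. Neither $A$ nor $B$ can lie on $\mu$, for otherwise $\mu$ would carry $d(n,k-2)+1$ nodes of $\mathcal X$, violating Proposition~\ref{maxcurve}(i). By Proposition~\ref{maxcurve}(ii), every $p\in\Pi_n$ vanishing on $\mathcal X_0$ factors as $p=\mu r$ with $r\in\Pi_{n-k+2}$; if $\deg p\le k$ then $\deg r\le 2$. Since $\mu(A)\mu(B)\ne 0$, the remaining conditions $p(A)=p(B)=0$ reduce to $r(A)=r(B)=0$, two independent linear conditions on $\Pi_2\cong\mathbb{R}^{6}$. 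They cut out a $4$-dimensional family of curves $\mu r\in\Pi_k$ vanishing on $\mathcal X$.

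\emph{Necessity.} Put $V:=\{p\in\Pi_k:p\big\vert_{\mathcal X}=0\}$, so $\dim V\ge 4$; let $\sigma:=\gcd V$ and $s:=\deg\sigma$. Observe $s\le k-2$ at once: $s=k$ yields $\dim V=1$, while $s=k-1$ yields $V\subseteq\sigma\cdot\Pi_1$, of dimension at most $3$. Granted the main claim $s=k-2$, write $V=\sigma\cdot R$ with $R\subset\Pi_2$ and $\dim R\ge 4$. For each $P\in\mathcal X\setminus\sigma$ we have $\sigma(P)\ne 0$, so every $r\in R$ vanishes at $P$. Were $|\mathcal X\setminus\sigma|\ge 3$, then $R$ would be contained in the space of conics through three distinct points, whose dimension is at most $3$---contradicting $\dim R\ge 4$. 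Hence $|\mathcal X\setminus\sigma|\le 2$, and combining with Proposition~\ref{maxcurve}(i) (applied to the reduced part of $\sigma$) forces $\sigma$ to be reduced and $|\mathcal X\cap\sigma|=d(n,k-2)$; by Definition~\ref{def:maximal}, $\sigma$ is a maximal curve of degree $k-2$ carrying all but the two exceptional nodes of $\mathcal X$.

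\emph{Main obstacle.} The substantive step is ruling out $s\le k-3$. In that case $V=\sigma\cdot V'$ with $V'\subset\Pi_{k-s}$, $\dim V'\ge 4$, $\gcd V'=1$, and every $q\in V'$ vanishes on $\mathcal X\setminus\sigma$, a set of size at least $d(n,k-2)+2-d(n,s)$ by Proposition~\ref{maxcurve}(i). A direct Bezout bound $|\{q_1=\cdots=q_4=0\}|\le(k-s)^2$ on the finite base locus of $V'$ suffices for many $(n,k,s)$ but is not sharp when $k$ approaches $n$. For the remaining range I plan to argue by induction on $k$, with the vacuous base case $k=2$ (then $|\mathcal X|=2$ and $\dim\Pi_2=6$ give $\dim V=4$ for free), extending $\mathcal X$ by $n-k+1$ nodes via Lemma~\ref{extm} to cardinality $d(n,k-1)+1$ and applying the preceding theorem (the $(d(n,k-1)+1,\,2)$-curves form). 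The delicate point is that so many additions could in principle deplete the whole $4$-dimensional system $V$; the strategy is to use the reducibility forced by a small common factor $\sigma$, together with Lemmas~\ref{XA}--\ref{extm}, to choose the added nodes so that at least two linearly independent members of $V$ survive, and then derive the required contradiction from the conclusion of the preceding theorem together with the $n$-independence of the enlarged set.
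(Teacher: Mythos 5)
Your sufficiency argument and your reduction of the necessity direction to the degree of $\sigma:=\gcd V$ are correct: the cases $\deg\sigma\in\{k,k-1\}$ are excluded by dimension count, and the case $\deg\sigma=k-2$ is handled cleanly (indeed more directly than the paper does, since the paper never isolates a gcd). But the theorem's entire content lives in the case you label the ``main obstacle'' --- showing that the linear system $V$ with $\dim V\ge 4$ cannot have $\gcd$ of degree $\le k-3$ (in particular, cannot be base-point-free up to the finite set $\mathcal X$) --- and for that case you offer only a plan, not a proof. Your own fallback, the Bezout bound, fails exactly where you suspect: with $t:=k-s\ge 3$ one has $|\mathcal X\setminus\sigma|\ge (t-2)(n-k+3)+2$, and already for $t=3$, $n=k+1$ this gives $6$, which does not exceed $t^2=9$; so Bezout on the base locus cannot close even the smallest subcase. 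The proposed induction on $k$ via Lemma~\ref{extm} and ``the preceding theorem'' is not carried out, and you yourself flag the unresolved difficulty that adding $n-k+1$ nodes may kill too much of $V$.

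For comparison, the paper avoids the gcd decomposition entirely. It adjoins three collinear auxiliary nodes $B_1,B_2,B_3$ on a line $\ell_0$ missing $\mathcal X$ so that $\mathcal X\cup\{B_1,B_2,B_3\}$ stays $n$-independent, uses Lemma~\ref{2cor} to extract from the span of the four curves a single curve $\sigma$ of degree $\le k$ through the enlarged set, extends to a maximal $n$-independent subset of $\sigma$ by $2(n-k)$ further nodes, and then, by pairing those nodes with lines and comparing $\sigma_0\,\ell_0\,\ell_1\cdots\ell_{n-k-1}$ with $\sigma q$ via Proposition~\ref{maxcurve}(ii), forces $\ell_0$ to divide $\sigma$. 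That produces a curve of degree $k-1$ through all of $\mathcal X$, and a separate lemma (Lemma~\ref{mainl}, itself a nontrivial extension argument) converts this into the maximal curve of degree $k-2$. None of this machinery --- the auxiliary collinear triple, the maximal-curve extension, the pairing of added nodes into lines --- appears in your proposal, and it is precisely what substitutes for the missing step. As it stands, your necessity direction is incomplete.
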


Let us mention that the inverse implication here is evident. Indeed, assume that $d(n, k-2)$
nodes of ${\mathcal X}$ are located in a curve $\mu$ of degree $k-2$.
Therefore the
curve $\mu$ is maximal and the remaining two nodes of ${\mathcal X},$ denoted by $A$ and $B,$ are outside of it: $A,B\notin \mu.$ Hence we have that
$${\mathcal P}_{k,{\mathcal X}}=\left\{{p : p\in \Pi_{k}}, p(A)=p(B)=0\right\}= \left\{{q\mu : q\in \Pi_{2}}, q(A)=q(B)=0\right\}. $$
Thus we readily get that
$$\dim{\mathcal P}_{k,{\mathcal X}}=\dim\left\{{q\in \Pi_{2}}: q(A)=q(B)=0\right\}=\dim{\mathcal P}_{2, \{A,B\}}=6-2=4. $$
In the last equality we use the fact that any two nodes are $2$-independent. We get also that there can be at most $4$ linearly independent curves of degree $\le k$ passing 
through all the nodes of $\mathcal X.$

Before starting the proof of Theorem \ref{mainth} let us present two lemmas.

\begin{lemma} \label{lem44} Assume that ${\mathcal X}$ is an $n$-independent node set and a node $A\in \mathcal X$ has an $n$-fundamental polynomial $p^\star_A$ such that $p^\star_A(A')\neq 0.$ Then we can replace the node $A$ with $A'$ such that the resulted set ${\mathcal X}':={\mathcal X}\cup\{A'\}\setminus \{A\}$ 
is again an $n$-independent. In particular, such replacement can be done in the following two cases:

i) If a node $A\in \mathcal X$ belongs to several components of $\sigma$ then we can replace it with a node $A',$ which belongs only to one component of $\sigma;$

ii) If a curve $q$ is not a component of an $n$-fundamental polynomial $p^\star_A$ then we can replace the node $A$ with a node $A'$ lying in $q.$
\end{lemma}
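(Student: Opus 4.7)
The strategy is to verify $n$-independence of $\mathcal{X}':=(\mathcal{X}\setminus\{A\})\cup\{A'\}$ directly, by exhibiting an $n$-fundamental polynomial, with respect to $\mathcal{X}'$, for every one of its nodes.

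For the new node $A'$ itself, observe that $p^\star_A$ vanishes on $\mathcal{X}\setminus\{A\} = \mathcal{X}'\setminus\{A'\}$ by definition of a fundamental polynomial, while the hypothesis gives $p^\star_A(A')\neq 0$. So $(1/p^\star_A(A'))\,p^\star_A$ is an $n$-fundamental polynomial for $A'$ in $\mathcal{X}'$; along the way, $p^\star_A(A')\neq 0$ automatically forces $A'\notin\mathcal{X}\setminus\{A\}$, so $\mathcal{X}'$ genuinely has $|\mathcal{X}|$ nodes (the case $A'=A$ being trivial). For any other node $B\in\mathcal{X}\setminus\{A\}$, the standard one-step Lagrange correction
\[
q_B \;:=\; p^\star_B \;-\; \frac{p^\star_B(A')}{p^\star_A(A')}\,p^\star_A
\]
lies in $\Pi_n$, and a direct evaluation shows $q_B(B)=1$, $q_B(A')=0$, and $q_B(C)=0$ for every $C\in\mathcal{X}\setminus\{A,B\}$; hence $q_B$ is the fundamental polynomial of $B$ with respect to $\mathcal{X}'$, which proves the main assertion.

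Parts (i) and (ii) then both reduce to producing an $A'$ of the required geometric type at which $p^\star_A$ does not vanish. For (i), fix an irreducible component $\sigma_1$ of $\sigma$ passing through $A$; by Bezout the finitely many other components of $\sigma$ meet $\sigma_1$ in only finitely many points, and $\mathcal{X}\setminus\{A\}$ is a finite set. Since $p^\star_A(A)=1\neq 0$, continuity of $p^\star_A$ along $\sigma_1$ allows us to choose $A'\in\sigma_1$ arbitrarily close to but distinct from $A$, lying on no other component of $\sigma$, outside $\mathcal{X}$, and with $p^\star_A(A')\neq 0$. For (ii), the hypothesis that $q$ is not a component of $p^\star_A$ means $p^\star_A$ does not vanish identically on $q$, so a point $A'\in q$ with $p^\star_A(A')\neq 0$ (and $A'\notin\mathcal{X}$) certainly exists. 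In both subcases the main statement then yields the conclusion.

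The mechanism is the textbook ``reduce modulo a fundamental polynomial'' trick, so no step is a genuine obstacle. The mildest subtlety lies in (i), where one must know that the exceptional set---other components of $\sigma$ through $A$, the zero locus of $p^\star_A|_{\sigma_1}$, and the finitely many points of $\mathcal{X}\setminus\{A\}$---cannot cover a full neighborhood of $A$ on the irreducible curve $\sigma_1$; but this is precisely the finite-intersection property of distinct irreducible curves combined with continuity.
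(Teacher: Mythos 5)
Your proposal is correct and follows essentially the same route as the paper: the paper simply notes that $p^\star_A$ (suitably normalized) serves as a fundamental polynomial for $A'$ with respect to ${\mathcal X}'$ and then invokes Lemma \ref{XA}, whereas you inline the proof of that lemma by writing down the one-step Lagrange correction $q_B$ explicitly. The treatments of cases (i) and (ii) --- nonvanishing of $p^\star_A$ in a neighborhood of $A$, and nonvanishing of $p^\star_A$ somewhere on $q$ since $q$ is not a component --- coincide with the paper's.
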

 
\begin{proof} Indeed, notice that $p^\star_A(A')\neq 0$ means that $p^\star_A$ is a fundamental polynomial also for the node $A'$ with respect to the set ${\mathcal X}'.$ Next, for (i) note that a fundamental polynomial of a node $A$ differs from $0$ in a neighborhood of $A.$ Finally, for (ii) note that $q$ is not a component of $p^\star_A$
means that there is a point $A'\in q$ such that $p^\star_A(A')\neq 0.$\end{proof}
\begin{lemma}\label{mainl}
Assume that the hypotheses of Theorem \ref{mainth} hold and assume additionally that 
there is a curve $q_{k-1}\in \Pi_{k-1}$ passing through all the nodes of ${\mathcal X}.$
Then all the nodes of
${\mathcal X}$ but two lie in a
maximal curve $\mu$ of degree $k-2.$
\end{lemma}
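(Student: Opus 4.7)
The plan is to first pin down the curve $q_{k-1}$ via Theorem~\ref{ht} and then reduce the problem, by deleting a single node, to the earlier Theorem~\ref{mainth}. Since $|\mathcal{X}| = d(n,k-2)+2 > d(n,k-2)$, the $n$-independence of $\mathcal{X}$ together with Proposition~\ref{maxcurve}(i) rules out any curve of degree $\le k-2$ passing through all of $\mathcal{X}$; in particular $\deg q_{k-1} = k-1$. Theorem~\ref{ht}, applied with $k$ replaced by $k-1$, then shows that $q_{k-1}$ is the \emph{unique} curve of degree $k-1$ through $\mathcal{X}$, and consequently $\dim \mathcal{P}_{k-1,\mathcal{X}} = 1$.

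Next, since $\Pi_1 q_{k-1}$ is a $3$-dimensional subspace of $\mathcal{P}_{k,\mathcal{X}}$ and $\dim \mathcal{P}_{k,\mathcal{X}} \ge 4$, I can pick $p_k \in \mathcal{P}_{k,\mathcal{X}}$ with $p_k \notin \Pi_1 q_{k-1}$. Its degree must be exactly $k$: a smaller degree would place $p_k$ in $\mathcal{P}_{k-1,\mathcal{X}}$, which is spanned by $q_{k-1}$, contradicting the choice of $p_k$.

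The key case split comes from removing a single node. Fix $A \in \mathcal{X}$ and set $\mathcal{X}' = \mathcal{X} \setminus \{A\}$; this is $n$-independent of cardinality $d(n,(k-1)-1)+1$. If for some choice of $A$ one has $\dim \mathcal{P}_{k-1,\mathcal{X}'} \ge 2$, then two linearly independent curves of degree $\le k-1$ pass through $\mathcal{X}'$, so the earlier Theorem~\ref{mainth} (with $k$ replaced by $k-1$) yields a maximal curve $\mu$ of degree $k-2$ through all but one node of $\mathcal{X}'$; thus $\mu$ misses at most two nodes of $\mathcal{X}$, which is the desired conclusion.

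The remaining, and hardest, scenario is that $\dim \mathcal{P}_{k-1,\mathcal{X}\setminus\{A\}} = 1$ for \emph{every} $A \in \mathcal{X}$, i.e.\ no node of $\mathcal{X}$ admits a $(k-1)$-fundamental polynomial within $\mathcal{X}$. Here I would exploit the extra curve $p_k$: apply Bezout's theorem to $p_k$ and $q_{k-1}$ when they share no common component, or extract the common factor otherwise, combined with Lemma~\ref{lem44} to relocate nodes onto specific irreducible components of $q_{k-1}$ and with Proposition~\ref{extcurve} to extend $\mathcal{X}$ on $q_{k-1}$ to a maximal $n$-independent superset $\mathcal{Y}$ of size $d(n,k-1)$ (on which $\dim \mathcal{P}_{k,\mathcal{Y}} = 3$ by Proposition~\ref{maxcurve}(ii)). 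The resulting dimension count, together with the global $(k-1)$-dependence of $\mathcal{X}$, should force $q_{k-1}$ to factor as $q_{k-1} = \mu \cdot \ell$, with $\mu$ of degree $k-2$ containing exactly $d(n,k-2)$ nodes and the line $\ell$ carrying the remaining two. I expect this last subcase to be the real obstacle in the proof, as it requires a delicate interplay of Bezout-style bounds, node replacements, and extension arguments along $q_{k-1}$.
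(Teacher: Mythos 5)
Your preliminary reductions are sound: the exact degree of $q_{k-1}$, the uniqueness of $q_{k-1}$ via Theorem~\ref{ht} (so $\dim\mathcal{P}_{k-1,\mathcal{X}}=1$), and the first branch of your case split. Indeed, if for some $A\in\mathcal{X}$ one has $\dim\mathcal{P}_{k-1,\mathcal{X}\setminus\{A\}}\ge 2$, then (since no curve of degree $\le k-2$ can contain all $d(n,k-2)+1$ $n$-independent nodes of $\mathcal{X}\setminus\{A\}$) the earlier Theorem~\ref{mainth} of \cite{HakTor} applies with $k$ replaced by $k-1$ and produces the maximal curve $\mu$ of degree $k-2$; and ``misses at most two nodes of $\mathcal{X}$'' upgrades automatically to ``exactly two'' by Proposition~\ref{maxcurve}(i). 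This reduction is a genuinely different, and rather economical, route into the problem.

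The gap is the second branch, which is where all the real content lies, and your text for it is a plan rather than an argument (``should force'', ``I expect this last subcase to be the real obstacle''). Worse, the outcome you hope to derive there is incompatible with the branch hypothesis: if $q_{k-1}=\mu\,\ell$ with $d(n,k-2)$ nodes on $\mu$ and the remaining two nodes $A,B$ on $\ell\setminus\mu$, then $\mu$ times a line through $B$ avoiding $A$ lies in $\mathcal{P}_{k-1,\mathcal{X}\setminus\{A\}}$ and is independent of $q_{k-1}$, so $\dim\mathcal{P}_{k-1,\mathcal{X}\setminus\{A\}}\ge 2$ --- contradicting the assumption defining the branch. So what you must actually prove is that this branch is vacuous, and none of the ingredients you list (Bezout with $p_k$, Lemma~\ref{lem44}, Proposition~\ref{extcurve}) is assembled into an argument that does so. For comparison, the paper avoids the case split entirely: it extends $\mathcal{X}$ to a maximal $n$-independent subset of $q_{k-1}$ by adding $n-k+1$ nodes, multiplies an arbitrary $p\in\mathcal{P}_{k,\mathcal{X}}$ by lines through the added nodes to force $q_{k-1}$ to divide the product, and uses the hypothesis $\dim\mathcal{P}_{k,\mathcal{X}}\ge 4$ (which your sketch never exploits in this branch) to conclude that the added nodes are collinear on a line component $\ell_{01}$ of $q_{k-1}$; writing $q_{k-1}=\ell_{01}q_{k-2}$, it then rules out three or more nodes of $\mathcal{X}$ on $\ell_{01}\setminus q_{k-2}$ by relocating an added node onto $q_{k-2}$ via Lemma~\ref{lem44}(ii). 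Until you supply an argument of comparable substance for your second branch, the proof is incomplete.
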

\begin{proof}
First note that the curve $q_{k-1}$ is of exact degree $k-1,$ since it passes through more than $d(n, k-2)$ $n$-independent nodes. This implies also that $q_{k-1}$ has no multiple component. 
Therefore we can extend the set ${\mathcal X}$ till a maximal $n$-independent set ${\mathcal Y}\subset q_{k-1},$ by adding $n-k+1$ nodes, i.e.,
$${\mathcal Y} = {\mathcal X}\cup{\mathcal A},$$
where ${\mathcal A}=\{A_0,\ldots,A_{n-k}\}.$

In view of Lemma \ref{lem44}, i), we may suppose that the nodes from ${\mathcal A}$ are not intersection points of the components of the curve $q_{k-1}.$

Next, we are going to prove that these $n-k+1$ nodes are collinear together with $m\ge 2$ nodes from ${\mathcal X}.$

To this end denote the line through the nodes $A_0$ and $A_{1}$ by $\ell_{01}.$ Then for each $i=2\ldots,n-k,$ choose a line $\ell_i$ passing through the node $A_{i}$ which is not a component of 
$q_{k-1}.$ We require also that each line passes through
only one of the mentioned nodes and therefore the lines are
distinct. 

Now suppose that $p\in\Pi_k$ vanishes on ${\mathcal X}.$
Consider the polynomial
$r=p\ell_{01}\ell_2\cdots\ell_{n-k}.$ We have that $r\in\Pi_n$ and $r$ vanishes on the node set ${\mathcal Y},$ which is a maximal $n$-independent set in the curve $q_{k-1}.$ Therefore we obtain that $r=q_{k-1}s,$
where $s\in\Pi_{n-k+1}.$ Thus we have that
$$p\ell_{01}\ell_2\cdots\ell_{n-k}=q_{k-1}s.$$
The lines $\ell_i,\ i=2,\ldots,n-k,$ are not components of $q_{k-2}.$ Therefore they are components of the polynomial $s.$ Thus we obtain that
$$p\ell_{01}=q_{k-1}\beta,\ \hbox{where}\ \beta\in\Pi_{2}.$$

Now let us verify that $\ell_{01}$ is a component of $q_{k-1}.$ Indeed, otherwise it is a component of the conic $\beta$ and we get that 
$$p\in \Pi_k, \ p\big\vert_{{\mathcal X}} = 0
 \ \implies\ p=q_{k-1}\ell,\ \hbox{where}\ \ell\in\Pi_{1}.$$
Therefore we get $\dim{\mathcal P}_{k,\mathcal X}=3,$ which contradicts the hypothesis.

Thus we conclude that 
\begin{equation}\label{l01}q_{k-1}=\ell_{01}q_{k-2}\ \hbox{where}\ q_{k-2}\in\Pi_{k-2}.
\end{equation}
Then let us show that all the nodes of ${\mathcal A}$ belong to $\ell_{01}.$
Suppose conversely that a node from ${\mathcal A},$ say $A_2,$ does not belong to the line $\ell_{01}.$ Then in the same way
as in the case of the line $\ell_{01}$ we get that $\ell_{02}$ is a component of $q_{k-1}.$ Thus the node $A_0$ is an intersection point of two components of $q_{k-1},$ i.e., $\ell_{01}$ 
and $\ell_{02},$ which contradicts our assumption.

We have that the curve $q_{k-2}$ passes through at most $d(n,k-2)$ nodes from ${\mathcal X}.$ Hence, in view of the relation \eqref{l01}, we get that at least $2$ nodes from ${\mathcal X}$ belong to the line $\ell_{01}.$

Next we will show that exactly $2$ nodes from ${\mathcal X\cap}\ell_{01}$ do not belong to $ q_{k-2}.$ This will prove Lemma since, in view of \eqref{l01}, we have that ${\mathcal X\setminus}\ell_{01}\subset q_{k-2}$ 

Assume by way of contradiction that at least $3$ nodes  from from ${\mathcal X\cap}\ell_{01}$ do not belong to $ q_{k-2}.$ 

Let us verify that then one can move the node $A_0,$ from $\ell_{01}$ to  a new location $A_0'\in q_{k-2}\setminus \ell_{01},$ such that the resulted set ${\mathcal Y} = {\mathcal X}\cup{\mathcal A}$ remains $n$-independent. This will contradict the proved fact ${\mathcal A}\subset\ell_{01}.$

In view of Lemma \ref{lem44}, ii), for this we need to find an $n$-fundamental polynomial of $A_0$ for which $q_{k-2}$ is not a component. Let us show that  any fundamental polynomial of $A_0$ has  this property. Indeed, suppose conversely that for an $n$-fundamental polynomial $p^\star_{A_0}\in\Pi_n$ the curve $q_{k-2}$ is a component, i.e., 
$p^\star_{A_0}=q_{k-2}r,$ where $r\in \Pi_{n-k+2}.$ We get from here that $r$ vanishes at $\ge 3+(n-k+1)-1=n-k+3$ nodes in $\ell.$ Therefore, in view of Proposition \ref{maxline}, $r$ vanishes at all the points of $\ell_{01}$ including $A_0,$ which is a contradiction.
\end{proof}

Now we are in a position to present
\begin{proof}[Proof of Theorem \ref{mainth}]
Recall that it remains to prove the direct implication. Let  $\sigma_1,\ldots,\sigma_4,$ be the four curves of degree $\le k$ that pass through
all the nodes of the $n$-independent set ${\mathcal X}$ with $|{\mathcal X}| = d(n,
k-2)+2.$ 

Consider three collinear nodes $B_1,B_2,B_3\notin{\mathcal X}$ such that the
following two conditions are satisfied:

$(i)$ The set ${\mathcal X}\cup \{B_1,B_2,B_3\}$ is $n$-independent;

$(ii)$ The line through $B_1,B_2,B_3,$ does not pass through any node from ${\mathcal X}.$

Let us verify that one can find such nodes $B_1,B_2,B_3,$ or the conclusion of Theorem \ref{mainth} holds. Indeed, in view of Lemma \ref{ext}, we can start by choosing some two nodes $B_i',\ i=1,2,$ such that 

$(i)'$ The set $\mathcal X\cup\{B_1’,B_2’\}$ is  $n$-independent. 

Then, according to Lemma \ref{eps'}, for some positive $\epsilon$ all the nodes
in $\epsilon$ neighborhoods of $B_i',\ i=1,2,$ satisfy $(i)'.$
Thus, from this neighborhoods we can choose the nodes $B_i,\ i=1,2,$ respectively, such that the line through them: $\ell_{0}$ does not pass through any node from ${\mathcal X}.$

Now,  let us esablish existence of the node $B_3,$ satisfying the condition $(i).$  

Assume, by way of contradiction, that there is no such node $B_3\in \ell_{0}.$ 

This, in view of Lemma \ref{XA}, means that any polynomial $p\in \Pi_n$ vanishing on ${\mathcal X}\cup \{B_1, B_2,\}$ vanishes identically on $\ell_{0}.$ By using Lemma \ref{2cor} we may choose a such polynomial $p$ from the linear span of linearly independent curves $\sigma_i, i=1,2,3.$ Then we get that $p\in \Pi_{k},\ p\big\vert_{\ell_{0}}=0.$ Thus we have  $p=\ell_{0}q,$ where $q\in\Pi_{k-1}.$
Now, in view of $(ii)$ we readily deduce that the curve  $q$ of degree $\le k-1$ passes through all the nodes of $\mathcal X.$
Thus the proof of Theorem is completed in view of Lemma \ref{mainl}.

Next, in view of Proposition \ref{2cor}, there is a curve of degree at most $k,$ denoted by $\sigma,$ which passes through all
the nodes of ${\mathcal X}':={\mathcal X}\cup \{B_1,B_2,B_3\}.$

Now notice that the curve $\sigma$ passes through more than $d(n,k-2)$ nodes and therefore its degree equals either to $k-1$ or $k.$ By taking into account Lemma \ref{mainl} we may assume that the degree of the curve $\sigma$ equals to $k.$ Evidently we may assume also that $\sigma$ has no multiple component. 

Therefore, by using Proposition \ref{extcurve}, we can extend the set ${\mathcal X}'$ till a maximal
$n$-independent set ${\mathcal X}''\subset\sigma.$ To this end, in view of
$|{\mathcal X}''| = d(n,k)$, we need to add a set of $d(n, k)-(d(n, k-2)+2)-3 = 2(n-k)$
nodes to ${\mathcal X}'$, denoted by $\mathcal A:=\{A_1,\ldots,A_{2(n-k)}\}.$
Thus we have that $${\mathcal X}'':={\mathcal X}\cup \{B_1,B_2,B_3\} \cup \mathcal A.$$

Thus the curve $\sigma$ becomes maximal
with respect to this set. In view of Lemma \ref{lem44}, i), we require that each node of $\mathcal A$ belongs only to one component of the curve $\sigma.$

Then, by using Lemma \ref{2cor}, we get a curve $\sigma_0$ of degree at most $k,$ different from $\sigma$ that passes through all
the nodes of ${\mathcal X}$ and two more nodes:  $A_0,A_0'\in\mathcal A,$ which will be described below.

In the sequel we intend to divide the set of nodes $\mathcal A$ into $n-k$ pairs such that the lines $\ell_1,\ldots,\ell_{n-k-1}$ through the first $n-k-1$ pairs from them, respectively, are not components of $\sigma.$ The last pair is specified as $A_0,A_0'.$  

Before establishing the mentioned division of $\mathcal A$ let us verify how we can finish the proof by using it.

Recall that $\ell_0$ is the line through the triple of the nodes $B_1,B_2,B_3.$ 
Notice that the following polynomial
\begin{equation*}\label{C_2}
\sigma_0 \, \ell_0\, \ell_1 \, \ell_2 \dots \,
\ell_{n-k-1}
\end{equation*}
of degree $n$ vanishes at
all the $d(n,k)$ nodes of ${\mathcal X}''\subset\sigma.$
Consequently, according to Proposition \ref{maxcurve}, $\sigma$ divides this polynomial:
\begin{equation}\label{sigmalq}
\sigma_0 \, \ell_0\, \ell_1 \, \ell_2 \dots \, \ell_{n-k-1}
= \sigma \, q, \quad q \in\Pi_{n-k}.
\end{equation}
The distinct lines  $\ell_1, \ell_2, \dots , \ell_{n-k-1},$ do not
divide the polynomial $\sigma \in\Pi_{k}$, therefore all they have
to divide $q \in\Pi_{n-k}.$ Therefore, we get from \eqref{sigmalq}:
\begin{equation}\label{betabeta'}
\sigma_0 \, \ell_0 = \sigma \, \ell, \ \hbox{where}\ \ell\in\Pi_1.
\end{equation}
Since the curves $\sigma$ and $\sigma_0$ are different the lines $\ell_0$ and $\ell$ also are different.
Therefore the
line $\ell_0$ has to divide $\sigma \in \Pi_k$:
\begin{equation*}
\sigma = \ell_0 \, r, \quad r \in \Pi_{k-1}.
\end{equation*}

In view of above condition (iii) the line $\ell_0$ does not pass through any node of $\mathcal X.$ Therefore the curve  $r$ of degree $k-1$ passes through all the nodes of $\mathcal X.$
Thus the proof of Theorem is completed in view of Lemma \ref{mainl}.

Observe that, in view of the above argument, we may assume that any line component of the curve $\sigma$ passes through at least a node from $\mathcal X.$ 

Next let us establish the above mentioned division of the node set $\mathcal A.$ 

Recall that each node of $\mathcal A$ belongs only to one component of the curve $\sigma.$ Therefore the line components of $\sigma$ do not intersect at the nodes of $\mathcal A.$

By using induction on $m$ one can prove easily the following  
\begin{lemma}\label{lemA0} Suppose that a finite set of lines ${\mathcal L}$ and  $2m$ nodes lying in these lines are given, such that no node is an intersetion point of two lines. Then one can divide the node set into $m$ pairs such that no pair belongs to the same line from ${\mathcal L}$ if and only if each line from ${\mathcal L}$ contains no more than $m$ nodes. 
\end{lemma}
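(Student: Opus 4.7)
The plan is induction on $m$, as hinted in the statement. Necessity is immediate: if some line $\ell \in \mathcal{L}$ contains at least $m+1$ of the $2m$ nodes, then by pigeonhole any partition of the nodes into $m$ pairs must place two nodes of $\ell$ into a common pair, so no valid division exists.

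For sufficiency the base case $m=1$ is trivial: two nodes, with each line holding at most one, must lie on distinct lines and form the required pair. For the inductive step, assuming the statement for $m-1$, the strategy is to remove a single pair of nodes lying on distinct lines so that the remaining $2(m-1)$ nodes still satisfy the bound ``each line contains at most $m-1$ nodes'', thereby reducing to the inductive hypothesis.

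The selection of the pair is handled by a short case analysis on the number of lines containing exactly $m$ nodes. Three or more such lines is impossible, since it would require at least $3m > 2m$ nodes. If two lines $\ell_1, \ell_2$ each contain $m$ nodes, they exhaust the full node set, and removing one node from each drops both counts to $m-1$. If exactly one line $\ell$ contains $m$ nodes, then $m$ nodes lie on other lines, so we may remove a node of $\ell$ together with a node from some other line, bringing the former to $m-1$, the latter to at most $m-2$, and leaving all other lines unchanged and bounded by $m-1$. Finally, if no line contains $m$ nodes then every line is already bounded by $m-1$; since $2m > m-1$ for $m\geq 2$, not all nodes can lie on a single line, and any two nodes on different lines can be paired and removed, after which each affected line drops by one and remains within the bound.

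The only obstacle is the bookkeeping required to confirm that after the chosen removal every line still respects the threshold $m-1$, and the case analysis above shows this is automatic. No deeper idea (e.g.\ Hall's theorem) seems necessary, although a Hall-type argument would provide an alternative route: view the nodes as vertices, color each by the unique line through it, and observe that the condition ``no class exceeds $m$'' is exactly what is needed to pair up the $2m$ elements avoiding monochromatic pairs.
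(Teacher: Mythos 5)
Your proof is correct and follows exactly the route the paper indicates: the paper omits the argument entirely, remarking only that the lemma ``can be proved easily by induction on $m$,'' and your case analysis (at most two lines can hold $m$ nodes, and in each configuration one can extract a cross-line pair preserving the bound $m-1$) supplies precisely the missing details. Nothing to correct.
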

Thus the above mentioned division of the node set $\mathcal A$ into $n-k$ pairs is possible if and only if no $n-k$ nodes of ${\mathcal A}_0:=\mathcal A\setminus \{A_0,A_0'\}$ are located in a line component, where the nodes $A_0,A_0'$ are those associated with the curve $\sigma_0.$ Observe also that we may associate any two nodes of $\mathcal A$ with $\sigma_0.$ 

Now notice that, in view of $|\mathcal A|=2(n-k),$ there can be at most two undesirable line components for the set $\mathcal A,$ i.e., lines containing each $n-k$ nodes from it. This case can be settled easily, by associating with $\sigma_0$ one node from each of the two components.  

Next suppose that there is only one undesirable line component with $n-k$ or $n-k+1$ nodes. In this case one or two nodes from this line we associate with $\sigma_0,$ respectively.

Finally consider the case of one undesirable line component $\ell$ with $s \ge n-k+2$ nodes. We have that 
$$\sigma=\ell q,\ \hbox{where}\ q\in\Pi_{k-1}.$$
 Now we will move $s-n+k-1$ nodes, one by one, from $\ell$ to the other component $q$ such that the set ${\mathcal X}'':={\mathcal X}\cup \{B_1,B_2,B_3\} \cup \mathcal A$ remains $n$-independent. Again, in view of Lemma \ref{lem44}, i), we require that each moved node belongs only to one component of the curve $\sigma.$ 

 To establish each described movement, in view of Lemma \ref{lem44}, ii), it suffices to prove that during this process each node $A\in\ell\cap \mathcal A,$ has no $n$-fundamental polynomial for which the curve $q$ is a component.  Suppose conversely that $p^\star_A=qr, \ r\in \Pi_{n-k+1}.$ Now, we have that $r$ vanishes at $\ge n-k+1$ nodes in $\ell\cap \mathcal A\setminus\{A\},$ and at least at a node from $\ell\cap \mathcal X$ mentioned above. Thus $r$ together with $p^\star_A$ vanishes at the whole line $\ell,$ including the node $A,$
which is a contradiction. It remains to note that there will be no more undesirable line, except $\ell,$ in the resulted set $\mathcal A,$ after the described movement of the nodes, since we finish by keeping exactly   
$n-k+1$ nodes in $\ell\cap \mathcal A.$

\end{proof}

\section{\label{5}  An application to the Gasca-Maeztu conjecture}

Recall that a node $A\in {\mathcal X}$ uses a line $\ell$ means that $\ell$ is a factor of the fundamental polynomial
$p=p^\star_{A},$ i.e., $p = \ell r,$ for some $r \in \Pi_{n-1}.$

A $GC_n$-set in plane is an $n$-poised set of nodes where the fundamental polynomial of each node is a product of $n$ linear factors. 

The Gasca-Maeztu conjecture states that any $GC_n$-set possesses a subset of $n+1$ collinear nodes.

It was proved in \cite{CG} that any line passing through exactly $2$ nodes of a $GC_n$-set ${\mathcal X}$ can be used at most by one
node from ${\mathcal X}.$ 

It was proved in \cite{HT}  that any used  line passing through exactly $3$ nodes of a $GC_n$-set ${\mathcal X}$ can be used either by exactly one
or three nodes from ${\mathcal X}.$ 

Below we consider the case of lines passing through exactly $4$ nodes.
\begin{corollary}
Let ${\mathcal X}$ be an $n$-poised set of nodes and  $\ell$ be a line which passes through exactly $4$ nodes. 
Suppose $\ell$ is used by at least four nodes from ${\mathcal X}.$ Then it is used by exactly six nodes from ${\mathcal X}.$ Moreover, if it is
used by six nodes, then they form a $2$-poised set. Furthermore, in the latter case, if ${\mathcal X}$ is a $GC_n$ set then the six nodes form a $GC_2$ set.
\end{corollary}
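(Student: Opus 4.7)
My plan is to set $\mathcal{M} = \{A \in \mathcal{X} : \ell\ \text{divides}\ p^\star_A\}$, $m = |\mathcal{M}| \ge 4$, $\mathcal{X}_\ell = \mathcal{X} \cap \ell$ (the four on-$\ell$ nodes), and $\mathcal{Y} = \mathcal{X} \setminus \mathcal{X}_\ell$; since a node on $\ell$ cannot use $\ell$, $\mathcal{M} \subseteq \mathcal{Y}$. Write $p^\star_A = \ell\,r_A$ with $r_A \in \Pi_{n-1}$ for each $A \in \mathcal{M}$. For any 4-subset $\mathcal{M}'' \subset \mathcal{M}$, the polynomials $\{r_A : A \in \mathcal{M}''\}$ are linearly independent in $\Pi_{n-1}$ and vanish on $\mathcal{Y} \setminus \mathcal{M}''$, which is $n$-independent and of cardinality $N_n - 8 = d(n, n-3) + 2$. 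Applying Theorem~\ref{mainth} with $k = n-1$, I conclude there are exactly four linearly independent curves of degree $\le n - 1$ through $\mathcal{Y} \setminus \mathcal{M}''$, and all but two of its nodes (call them $U, V$) lie on a maximal curve $\mu$ of degree $n-3$. Since $\mu \cap \mathcal{X}$ contains $d(n, n-3)$ nodes, Proposition~\ref{maxcurve} shows $\mu$ is maximal for all of $\mathcal{X}$, $\mathcal{X}_\ell \cap \mu = \emptyset$, and $\mathcal{M}'' \cap \mu = \emptyset$; hence $\mathcal{X} \setminus \mu = \mathcal{X}_\ell \cup \mathcal{M}'' \cup \{U, V\}$, ten nodes in total.

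The crux is then $\{U, V\} \subset \mathcal{M}$. Suppose toward contradiction $U \in \mathcal{Y} \setminus \mathcal{M}$. Then $U \notin \mu$, and by Proposition~\ref{maxcor}, $U$ uses $\mu$: $p^\star_U = \mu\, q_U$ with $q_U \in \Pi_3$. Since $\mu$ does not vanish on $\mathcal{X}_\ell$ but $p^\star_U$ does, $q_U$ vanishes at the four collinear nodes of $\mathcal{X}_\ell$; Proposition~\ref{maxline} then yields $q_U = \ell\,\widetilde q$, whence $\ell \mid p^\star_U$, forcing $U \in \mathcal{M}$ --- a contradiction. Thus $\{U, V\} \subset \mathcal{M} \setminus \mathcal{M}''$, so $|\mathcal{M} \setminus \mathcal{M}''| \ge 2$ and $m \ge 6$.

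To show $m = 6$ I rule out $m \ge 7$. In that case $|\mathcal{M} \setminus \mu| = 10 - 4 = 6 < m$, so some $A_\star \in \mathcal{M} \cap \mu$ exists. Picking a 4-subset $\mathcal{M}''_\text{new} \ni A_\star$ that agrees with $\mathcal{M}''$ in three nodes and rerunning the previous argument produces a maximal curve $\mu_\text{new}$ of degree $n-3$ with $A_\star \in \mathcal{M}''_\text{new} \subseteq \mathcal{X} \setminus \mu_\text{new}$, so $\mu_\text{new} \ne \mu$. However, both curves contain the $n$-independent set $\mathcal{Y} \setminus \mathcal{M}$, and for $m \le 9$ this set alone has cardinality $\ge d(n, n-4) + 2 = N_n - 13$, so Theorem~\ref{ht} applied with $k = n-3$ forces $\mu = \mu_\text{new}$, a contradiction. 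For $m \ge 10$ the same strategy will work after augmenting the list of common nodes with elements of $\mathcal{M} \cap \mu \cap \mu_\text{new}$, arranged by choosing $\mathcal{M}''_\text{new}$ to share as many nodes as possible with $\mathcal{M}''$ --- this bookkeeping I expect to be the most delicate step of the argument.

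Once $m = 6$ is known, $\mathcal{M}'' \cup \{U, V\} = \mathcal{M}$, hence $\mathcal{X} \setminus \mu = \mathcal{X}_\ell \cup \mathcal{M}$, and every $A \in \mathcal{M}$ uses $\mu$ by Proposition~\ref{maxcor}. Combining with $\ell \mid p^\star_A$ and $\ell \nmid \mu$ gives a factorization $p^\star_A = \ell\,\mu\,s_A$ with $s_A \in \Pi_2$. For $B \in \mathcal{M} \setminus \{A\}$, the identity $\ell(B)\mu(B) s_A(B) = p^\star_A(B) = 0$ together with $\ell(B)\mu(B) \ne 0$ yields $s_A(B) = 0$, while $s_A(A) \ne 0$; thus each $s_A$ is a 2-fundamental polynomial for $A$ with respect to the six-node set $\mathcal{M}$, making $\mathcal{M}$ 2-poised. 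Finally, if $\mathcal{X}$ is a $GC_n$-set then $p^\star_A$ is a product of $n$ linear factors, and the factorization $p^\star_A = \ell\mu s_A$ forces $\mu$ to be a product of $n-3$ lines and $s_A$ a product of two lines; hence the 2-fundamentals of $\mathcal{M}$ are products of two lines, showing $\mathcal{M}$ is a $GC_2$-set.
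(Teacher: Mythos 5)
Your overall strategy coincides with the paper's: factor $p^\star_A=\ell\, r_A$ for four users of $\ell$, apply Theorem \ref{mainth} with $k=n-1$ to the $N-8=d(n,n-3)+2$ nodes off $\ell$ and off the four chosen users to obtain a maximal curve $\mu$ of degree $n-3$ with two exceptional nodes, show via Propositions \ref{maxcor} and \ref{maxline} that those two exceptional nodes also use $\ell$ (hence at least six users), rule out a seventh user by the uniqueness Theorem \ref{ht}, and read off $2$-poisedness and the $GC_2$ property from the factorization $p^\star_A=\ell\,\mu\, s_A$. These steps are carried out correctly, including the counting argument showing $\mu$ meets $\mathcal X$ in exactly $d(n,n-3)$ nodes, so that $\mathcal X_\ell$ and the chosen quadruple lie off $\mu$.

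The one genuine gap is in the exclusion of $m\ge 7$. Your contradiction requires a common $n$-independent subset of $\mu\cap\mu_{\mathrm{new}}$ of size at least $d(n,n-4)+2=N-13$, and you take $\mathcal Y\setminus\mathcal M$, which has $N-4-m$ nodes and therefore only suffices for $m\le 9$; for $m\ge 10$ you explicitly defer the argument, and the fix you sketch (augmenting by elements of $\mathcal M\cap\mu\cap\mu_{\mathrm{new}}$) is circular, since you do not know in advance which nodes of $\mathcal M\cap\mu$ lie on $\mu_{\mathrm{new}}$. The repair is simple and independent of $m$: both curves contain $\mathcal Y\setminus\bigl(\mathcal M''\cup\mathcal M''_{\mathrm{new}}\cup\{U,V,U_{\mathrm{new}},V_{\mathrm{new}}\}\bigr)$, and since $\mathcal M''_{\mathrm{new}}$ shares three nodes with $\mathcal M''$ the excluded set has at most $5+4=9$ elements, leaving at least $N-4-9=N-13$ common nodes; Theorem \ref{ht} with $k=n-3$ then gives $\mu=\mu_{\mathrm{new}}$, contradicting $A_\star\in\mu\setminus\mu_{\mathrm{new}}$. (The paper sidesteps the issue differently: it takes the hypothetical seventh user $D$, which necessarily lies on $\mu$, builds $\mu'$ from $\{B_1,B_2,B_3,D\}$ so that $D\notin\mu'$, and compares $\mu$ and $\mu'$ on $\mathcal X$ minus at most $13$ explicitly listed nodes.) With this one step closed, your proof is complete.
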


\begin{proof}
Assume that $\ell\cap{\mathcal X}=\{A_1,\ldots,A_4\}=:\mathcal A.$ Assume also that the four nodes in $\mathcal B:=\{B_1,\ldots,B_4\}\in {\mathcal X}$ use the line
$\ell:$
$p_{B_i}^{\star}=\ell \, q_i,\ i=1,\ldots,4,$
where $q_i \in \Pi_{n-1}.$

The polynomials $q_1,\ldots, q_4,$ vanish at $N-8$ nodes of the set ${\mathcal X}':={\mathcal X}\setminus(\mathcal A\cup\mathcal B).$ Hence through these $N-8=d(n, n-3)+2$ nodes pass four linearly independent curves of degree $n-1.$ 
By Theorem \ref{mainth} there exists a maximal curve
$\mu$ of degree $n-3$ passing through $N-10$ nodes of ${\mathcal X}'$ and the remaining two nodes denoted by $C_1,C_2,$ are outside of it.
Now, according to Proposition \ref{maxcor}, the nodes $C_1,C_2,$ use $\mu:$
$$p_{C_i}^{\star} = \mu r_i,\ r_i \in \Pi_3,\ i=1,2.$$
These polynomials $r_i$ have to vanish at the four nodes of $\mathcal A\subset\ell.$
Hence $q_i=\ell\beta_i,\ i=1,2,$ with $\beta_i\in \Pi_2.$
Therefore, the nodes $C_1,C_2$ use the line ${\ell}:$
$$p_{C_i}^{\star} = \mu{\ell}\beta_i,\ i=1,2.$$
Hence if four nodes in $\mathcal B\subset{\mathcal X}$ use the line ${\ell}$ then there exist two
more nodes $C_1,C_2\in{\mathcal X}$ using it and all the nodes of ${\mathcal Y}:={\mathcal X}\setminus (\mathcal A\cup\mathcal B\cup\{C_1,C_2\})$ lie in a
maximal curve $\mu$ of degree $n-3:$
\begin{equation}\label{D}{\mathcal Y}\subset\mu.\end{equation}

Next, let us show that there is no
seventh node using ${\ell}$. Assume by way of
contradiction that except of the six nodes in $\mathcal S:=\{B_1,\ldots,B_4, C_1,C_2\},$ there is a seventh node $D$ using ${\ell}.$ Of course we have that
$D\in{\mathcal Y}.$

Then we have that the four nodes $B_1,B_2,B_3$ and $D$ are using $\ell$ therefore, as was proved above, there exist two
more nodes $E_1,E_2\in{\mathcal X}$  (which may coincide or not with $B_4$ or $C_1,C_2$) using it and all the nodes of ${\mathcal Y}':={\mathcal X}\setminus (\mathcal A\cup
\{B_1,B_2,B_3,D, E_1,E_2\})$ lie in a
maximal curve $\mu'$ of degree $n-3.$ 

We have also that
\begin{equation}\label{E}
p_{D}^{\star} = \mu' q', \ q' \in \Pi_3.
\end{equation}

Now, notice that both the curves $\mu$ and $\mu'$ pass through all the nodes of the set
${\mathcal Z}:={\mathcal X}\setminus (\mathcal A\cup\mathcal B\cup\{C_1,C_2,D,E_1,E_2,\})$ with $|{\mathcal Z}|\ge N-13.$

Then, we get from Theorem \ref{ht}, with $k=n-4$, that $N-13 = d(n, n-4)+2$ nodes determine
the curve of degree $n-3$ passing through them uniquely. Thus $\mu$ and $\mu'$ coincide.

Therefore, in view of \eqref{D} and \eqref{E}, $p_{D}^{\star}$ vanishes at all the nodes of ${\mathcal Y}$,
which is a contradiction since $D\in{\mathcal Y}.$

Now, let us verify the last ``moreover" statement. Suppose the six nodes in $\mathcal S\subset{\mathcal X}$ use the line ${\ell}.$ Then, as we obtained earlier, the nodes ${\mathcal Y}:={\mathcal X}\setminus (\mathcal A\cup\mathcal S)$ are located in a
maximal curve $\mu$ of degree $n-3.$ Therefore the fundamental polynomial of each $A\in \mathcal S$ uses $\mu$ and hence $\ell:$
$$\ p^\star_A=\mu \ell q_{A},\ \hbox{where}\ q_{A}\in\Pi_2.$$
It is easily seen that $q_{A}$ is a $2$-fundamental polynomial of $A\in \mathcal S.$
\end{proof}

%Bibliography

\end{document}